\title[On Moffatt's magnetic relaxation equations]{On Moffatt's magnetic relaxation equations}
\author[R.~Beekie]{Rajendra Beekie}
\address{Courant Institute of Mathematical Sciences, New York University, New York, NY 10012}
\email{beekie@cims.nyu.edu}
\author[S.~Friedlander]{Susan~Friedlander}
\address{Department of Mathematics, University of Southern California, Los Angeles, CA 90089}
\email{susanfri@usc.edu}
\author[V.~Vicol]{Vlad Vicol}
\address{Courant Institute of Mathematical Sciences, New York University, New York, NY 10012}
\email{vicol@cims.nyu.edu}
\def\p{\partial}
\def\eps{\varepsilon}
\def\les{\lesssim}
\renewcommand*{\div}{\ensuremath{\mathrm{div\,}}}
\newcommand{\brak}[1]{\bigl[ #1 \bigr]} 
\newcommand{\norm}[1]{\left \| #1 \right\|} 
\newcommand{\abs}[1]{\left|#1\right|}
\newcommand{\RR}{\mathbb R}
\newcommand{\Proj}{\mathbb P}
\newcommand{\TT}{\mathbb T}
\newcommand{\OO}{\mathcal O}
\renewcommand*{\bar}{\overline}
\newtheorem{theorem}{Theorem}[section]
\newtheorem{lemma}[theorem]{Lemma}
\newtheorem{prop}[theorem]{Proposition}
\theoremstyle{definition}
\newtheorem{remark}[theorem]{Remark}
\numberwithin{equation}{section}
\newcommand{\po}{\mathbb{P}_{0}}
\newcommand{\pn}{\mathbb{P}_{\! \perp}}
\date{\today}
\begin{document}
\maketitle

\begin{abstract}
We investigate the stability properties for a family of equations introduced by Moffatt to model magnetic relaxation. These models preserve the topology of magnetic streamlines, contain a cubic nonlinearity, and yet have a favorable $L^2$ energy structure. We consider the local and global in time well-posedness of these models and establish a difference between the behavior as $t\to \infty$ with respect to weak and strong norms.
\end{abstract}

\setcounter{tocdepth}{1} 
\tableofcontents 

\vspace{-0.33in}

\section{Introduction}
In the 1960s  V.I.~Arnold developed a new set of geometric ideas concerning the incompressible Euler equations governing the flow of an ideal fluid. In the following decades the subject of  topological  hydrodynamics  flourished. Following Arnold's seminal work~\cite{Arnold66} there was an enormous body of literature on the subject. We refer to only a few of the many important papers
including Ebin and Marsden~\cite{EbinMarsden70}, Holm, Marsden, Ratiu, and Weinstein~\cite{HolmEtAl85}, and Arnold and Khesin~\cite{ArnoldKhesin98}. This geometric perspective  views  the incompressible Euler equations as the geodesic equations of a right-invariant metric on the  infinite-dimensional group of volume preserving diffeomorphisms. Of particular importance are the fixed points of the underlying dynamical system, namely {\em steady fluid flows}, and their topological richness~\cite{EncisoPeraltaSalas12,Gavrilov19,ConstantinLaVicol19}. Moreover, as with any dynamical system, of fundamental importance is the question of {\em accessibility} of these equilibria. In this paper, we discuss a mechanism of reaching these equilibria not through the Euler vortex dynamics itself, but via a topology preserving diffusion process, called   {\em magnetic relaxation}.

The magnetic relaxation equation (MRE) considered here was introduced by Moffatt~\cite{Moffatt85,Moffatt21}  to describe a topology-preserving dissipative equation, whose solutions are conjectured to converge  in the infinite time limit  towards ideal Euler/magnetostatic equilibria (see also Brenier~\cite{Brenier14}); we recall the motivation in Section~\ref{sec:motivation} below. We are interested in understanding the long time behavior for 
\begin{subequations}
\label{eq:MRE}
\begin{align}
\p_t B + u \cdot \nabla B &= B \cdot \nabla u 
\label{eq:B:evo} \\
(-\Delta)^\gamma u &=B \cdot \nabla B + \nabla p 
\label{eq:u:def} \\
\div u = \div B &=0
\label{eq:div:free}
\end{align}
\end{subequations}
where the unknowns are the incompressible velocity vector field $u$, the magnetic vector field $B$, and the fluid pressure $p$. We consider the problem  posed on $\TT^d = [- \pi ,  \pi ]^d$ with $d\in \{2,3\}$, and $u$ is taken to have zero mean on $\TT^d$. The parameter $\gamma \geq 0$ is a  regularization parameter of the constitutive law $B \mapsto u$: the case $\gamma = 0$ corresponds to a Darcy-type regularization (as was done in~\cite{Moffatt85,Brenier14,Moffatt21}), the case $\gamma = 1$ corresponds to a Stokes-type regularization, while the general case $\gamma>0$ may be alternatively used in numerical simulations to smoothen the velocity gradients. This constitutive law may be written as 
\begin{align}
u  = (-\Delta)^{-\gamma} \Proj ( B \cdot \nabla B) = (-\Delta)^{-\gamma} \Proj \div ( B \otimes B)
\label{eq:B:to:u}
\end{align}
where $\Proj$ is the Leray projector (onto divergence-free vector fields).
We emphasize that the topology of the vector field $B$ is preserved under the vector transport equation \eqref{eq:B:evo} irrespective of the regularization parameter $\gamma$ in the constitutive law~\eqref{eq:u:def}. We note that if $\div B_0 = 0$, then the vector transport equation \eqref{eq:B:evo} preserves the incompressibility of $B$ at all later times.

From a mathematical perspective, the analysis of the MRE system~\eqref{eq:MRE} is unusually challenging. Not only is it an active vector equation, versus the more familiar active scalar equations in fluid dynamics~\cite{ConstantinMajdaTabak94,CastroCordobaGancedoOrive09}, but the nonlinearity is cubic in $B$. Some of the interesting special features of MRE are discussed in the article of Brenier~\cite{Brenier14}. Brenier presents a concept of dissipative weak solutions for MRE when the regularization parameter $\gamma$ is set to zero. It is shown in two space dimensions that the initial value problem admits such global dissipative weak solutions, and that they are unique whenever they are smooth. However, not even the local existence of strong solutions to~\eqref{eq:MRE} is known.

Besides local well-posedness, in this paper we examine the long-time behavior of the magnetic relaxation equations \eqref{eq:MRE}, and show that although the velocity field $u(\cdot,t)$ converges to $0$ as $t\to \infty$ (for a sufficiently large regularization parameter $\gamma$), there are many open questions regarding the sense in which the magnetic field $B(\cdot,t)$ itself converges as $t\to \infty$ ({\em weak} vs {\em strong} convergence; see Remark~\ref{rem:relaxation?}). In two dimensions, we give a specific example of asymptotic stability to a simple two dimensional steady state. In contrast, for a specific class of two-and-a-half-dimensional solutions we illustrate instability for the MRE system \eqref{eq:MRE}, by showing that the  the magnetic current $\nabla \times B$ grows unboundedly as $t\to \infty$. Our results are presented in Section~\ref{sec:results} below.

\subsection{Motivation behind magnetic relaxation}
\label{sec:motivation}
There are certain well known analogies between Euler equilibria and equilibria of incompressible magnetohydrodynamics (MHD). Recall that the ideal incompressible MHD equations are 
\begin{subequations}
\label{eq:MHD}
\begin{align}
\p_t B + u\cdot \nabla B &= B \cdot \nabla u \label{eq:MHD:a} \\
\p_t  u + u \cdot \nabla u  + \nabla p&= B \cdot \nabla B \label{eq:MHD:b}  \\
\div u = \div B &= 0 \,.
\end{align} 
\end{subequations}
 The equilibrium equation for magnetostatics is obtained by setting $B = \bar B(x)$ and $u = 0$ in \eqref{eq:MHD} to give
\begin{align}
\nabla \bar p  = \bar B  \cdot \nabla \bar B \,, \qquad \div \bar B  = 0 \,. 
\label{eq:steady:MHD}
\end{align}
In comparison, the equilibrium equation for incompressible Euler steady states is obtained by setting $u = \bar u(x)$ and $B =0$ in \eqref{eq:MHD} to give
\begin{align}
\bar u \cdot \nabla \bar u + \nabla \bar p =0\,, \qquad \div \bar u = 0 \,. 
\label{eq:steady:Euler}
\end{align}
Clearly, any vector field $\bar B$ that satisfies \eqref{eq:steady:MHD} is also an equilibrium solving \eqref{eq:steady:Euler} upon changing the sign of the pressure, and vice-versa. However, this analogy between magnetic and fluid steady states \eqref{eq:steady:MHD}--\eqref{eq:steady:Euler} does not extend to the evolution of perturbations about these steady states, as governed by the ideal MHD system on the one hand, respectively the pure Euler dynamics on the other hand. For example, stability issues for Euler steady flows are not the same as the stability for magnetostatic equilibria~\cite{HolmEtAl85,Moffatt86,SadunVishik93,FriedlanderVishik95}.

In \cite{Arnold74}, Arnold suggested a process which demonstrates the existence of an Euler equilibrium that has the same topological structure as an arbitrary divergence free magnetic field. The idea is to use the evolution dynamics of the magnetic field to reach an Euler/magnetic equilibria which preserves Kelvin circulation. This concept was developed by Moffatt~\cite{Moffatt85} (see also the excellent recent overview~\cite{Moffatt21}).  The {\em magnetic relaxation} procedure envisioned by Moffatt preserves the streamline topology of an initial divergence free three-dimensional vector $B_0(x)$,  but abandons the constraint that $B(x,t)$ should remain smooth as $t\to \infty$. In this model, the magnetic field evolves under the {\em frozen field} equation \eqref{eq:MHD:a} via a vector field $u(x,t)$ which is related to $B(x,t)$ by a suitable constitutive law, which has two properties: that $u(x,t)$ formally decays to $0$ as time goes to infinity, and the vector fields $u$ and $j \times B$ are parallel with non-negative proportionality factor (here $j = \nabla \times B$ is the current field). Moffatt introduced the concept of {\em topological accessibility} which is weaker than {\em topological equivalence}\footnote{Here, we say that $B_1$ and $B_0$ are topologically equivalent, if $B_1(X(\alpha)) = \nabla_\alpha X(\alpha) B_0(\alpha)$ for a volume preserving diffeomorphism $\alpha \mapsto X(\alpha)$. In contrast, to say that $B_1$ is topologically accessible from $B_0$ means that  (see e.g.~in~\cite[Section 8.2.1]{Moffatt21}) $B_1 = \lim_{t\to \infty} B(\cdot,t)$, where $B$ is a solution of \eqref{eq:MHD:a} with initial datum $B_0$ and some solenoidal vector field $u$, under the additional property that $\int_0^\infty \left| \int_{\TT^d} B \cdot (B\cdot \nabla u) dx \right| dt < \infty$ .} because it allows for the appearance of discontinuities in the magnetic field (current sheets) as $t\to \infty$. As an example of a constitutive law relating $u$ to $B$, Moffatt~\cite{Moffatt85,Moffatt21}, also Brenier~\cite{Brenier14}, suggested 
\begin{align*}
u = B\cdot\nabla B + \nabla p \,,
\end{align*}
which may be used in conjunction with \eqref{eq:MHD:a} to show that the magnetic energy satisfies 
\begin{align*}
\frac 12 \frac{d}{dt} \norm{B}_{L^2}^2 = - \norm{u}_{L^2}^2\,. 
\end{align*}
Hence the energy of $B$ is strictly monotonically decreasing, until $u\equiv 0$. Note that $\|B(\cdot,t)\|_{L^2}$ is bounded from below uniformly in time, solely in terms of the initial magnetic helicity~\cite{Arnold74}  (see Remark~\ref{rem:helicity}).

\subsection{Main Results}
\label{sec:results}
The main results in this paper are as follows.

In Section~\ref{sec:local:Sobolev} we prove local existence for solutions of the MRE system~\eqref{eq:MRE} in Sobolev spaces $H^s$. This result holds for any $\gamma\geq 0$ and dimension $d \geq 2$, for Sobolev exponents $s >  d/2 + 1$. Theorem~\ref{thm:local} follows from the dissipative nature of \eqref{eq:MRE}, exhibited in its $L^2$ energy estimate, by using two commutator estimates at the level of $H^s$.

In Section~\ref{sec:global:Sobolev} we prove global existence in $H^s$ when the regularization parameter satisfies $\gamma > d/2 + 1$. For such $\gamma$, Theorem~\ref{thm:global:regularized} shows that the magnetic relaxation question is well-posed, because we can speak of a global in time solution. We recall that the natural values of $\gamma$ coming from physical arguments are $\gamma = 0$ (corresponding to a Darcy-type approximation) or $\gamma = 1$ (corresponding to a Stokes-type approximation); unconditional global existence in this range of $\gamma$ remains open.

In Section~\ref{sec:t:to:infinity} we investigate the possible behavior of the solutions in Section~\ref{sec:global:Sobolev}  as $t\to \infty$.
We prove in Theorem~\ref{thm:velocity:relaxation} that the velocity field $u (\cdot, t)$ converges to $0$, strongly, asymptotically as time diverges. The specific form of this relaxation is given by~\eqref{eq:velocity:relaxation}. We note, however, we do not obtain a rate for the convergence.  Also, it remains open to prove that the vector $B ( \cdot, t )$ itself converges to a steady Euler (magnetic) weak solution.

In Section~\ref{sec:2D:stability} we consider the MRE system for $d=2$ and $\gamma = 0$. We study the
asymptotic stability of a special magnetostatic state, $\bar B = e_1$, under Sobolev smooth perturbations. The evolution equation for the perturbations is given by \eqref{eq:magnetic:perturbation2}, which is an active vector equation with a cubic nonlinearity.  Equation~\eqref{eq:magnetic:perturbation2} has some similarities with the equation for the perturbation  of a linearly stratified density in the two dimensional
incompressible porous media equation (IPM); the former being, however, an active scalar equation with a quadratic nonlinearity.
In the context of IPM, Elgindi~\cite{Elgindi17} studied the asymptotic stability of the same special steady state and proved that solutions must converge
(i.e.~relax) as $t\to \infty$ to a stationary solution of the IPM equation; see also the work~\cite{CastroCordobaLear19} in the case of a bounded domain. In Theorem~\ref{theorem:main} we employ some of these ideas to prove asymptotic stability (relaxation) of MRE in this special two dimensional setting.

In Section~\ref{sec:3D:instability} we turn to the three dimensional MRE system. We observe that there is an interesting class of exact solutions to  \eqref{eq:MRE} when $\gamma = 0$, which has analogies to the well know exact solutions of the three dimensional Euler equation, which are in fact two-and-a-half dimensional, cf.~Yudovich~\cite{Yudovich74} or~DiPerna and Majda~\cite{DiPernaMajda87a}. 
In the case of the Euler equation the construction of the exact solution is based on a non-constant coefficient transport equation, which produces a two-and-a-half  dimensional flow whose vorticity grows unboundedly in time (linearly in time for shear flows~\cite{Yudovich74,Yudovich00}, or exponentially in time for cellular flows~\cite{ElgindiMasmoudi20}). In contrast, for the MRE system the construction of the exact solution is based on a non-constant coefficient heat-type equation, which has a {\em rank $1$ diffusion matrix}. By choosing the spatial dependence of the initial data appropriately, in Theorem~\ref{thm:growth} we construct an example of a magnetic field $B(\cdot,t)$ which converges (relaxes) in $L^2$ as $t\to \infty$ to a steady solution $\bar B$, but this limiting solution is not smooth and exhibits magnetic current sheets; as such,  the current $j(\cdot,t) = \nabla \times B(\cdot,t)$ grows as $t^{\frac{1}{4}}$ in $L^2$.  Additionally, in Theorem~\ref{thm:exponential:growth} we show that in the presence of hyperbolic dynamics, for instance along the separatrix of a cellular flow, the current $j(\cdot,t)$ may even grow exponentially in time, for all time, which is a strong type of instability. 

Clearly relaxation of the MRE system~\eqref{eq:MRE} is a very subtle matter.
We further illustrate this in~Section~\ref{sec:open:problems}, where we discuss a number of open problems.
 
\subsection*{Acknowledgements} 
R.B. was supported by the NSF Graduate Fellowship Grant~1839302. 
S.F. was in part supported by NSF grant DMS~1613135.
S.F. thanks IAS for its hospitality when she was a Member in 2020--21.
V.V. was in part supported by the NSF grant CAREER DMS~1911413.
V.V. thanks B.~Texier and S.~Shkoller for stimulating discussions.

\section{Local existence in Sobolev spaces for all $\gamma \geq 0$}
\label{sec:local:Sobolev}

The dissipative nature of \eqref{eq:MRE}, already alluded to in the introduction, is seen by inspecting the magnetic energy estimate
\begin{align}
 \frac 12 \frac{d}{dt} \norm{B}_{L^2}^2 
 &= \int_{\TT^d} B \cdot \left(  B\cdot \nabla  u \right)  = -  \int_{\TT^d} u \cdot \left( B \cdot \nabla  B\right)\notag\\
 &= -  \int_{\TT^d} u \cdot \left( (-\Delta)^\gamma u - \nabla p \right)    = - \norm{u}_{\dot{H}^\gamma}^2
 \, .
 \label{eq:energy}
\end{align}
Integrating in time, we deduce that 
\begin{align}
\sup_{s\in[0,t]} \norm{B(\cdot,s)}_{L^2}^2 + 2 \int_0^t \norm{u(\cdot,s)}_{\dot{H}^\gamma}^2 ds \leq \norm{B_0}_{L^2}^2 
\label{eq:energy:a}
\end{align}
for all $t>0$ such that the solution is sufficiently smooth on $[0,t]$ to justify the integration by parts manipulations in \eqref{eq:energy}.

\begin{remark}[\bf A global lower bound for the magnetic energy]
\label{rem:helicity}
We note that no matter the level of regularization in the constitutive law $B \mapsto u$ in \eqref{eq:u:def}, the magnetic helicity 
\begin{align*}
{\mathcal H}(t) = \int_{\TT^d} A(x,t) \cdot B(x,t) dx \,,
\end{align*}
is still a constant function of time,\footnote{Note in contrast that the cross-helicity $\int_{\TT^d} u\cdot B dx$ is expected to vanish as $t\to \infty$ since $B(\cdot,t)$ remains uniformly bounded in $L^2$, while $u(\cdot,t)\to 0$ in $L^2$.}  as long as the solutions remain sufficiently smooth. Here we have denoted by $A$ the zero mean vector potential for $B$ defined in terms of the Biot-Savart law $A =(-\Delta)^{-1} \nabla \times B$. Indeed, it is not hard to see that \eqref{eq:B:evo} implies that $\frac{d}{dt} {\mathcal H} = 2 \int_{\TT^d} B \cdot (u\times B) dx = 0$. This observation and the Poincar\'e inequality $\| A\|_{L^2(\TT^d)} \leq  \| B\|_{L^2(\TT^d)}$, imply the so-called {\em Arnold inequality}~\cite{Arnold74}
\begin{align}
\norm{B(\cdot,t)}_{L^2}^2 \geq  |{\mathcal H}(0)|
\,,
\label{eq:Arnold:ineq}
\end{align}
for all $t\geq 0$. Therefore, while \eqref{eq:energy} shows that the magnetic energy is strictly decreasing as long as $u\not \equiv 0$, \eqref{eq:Arnold:ineq} also shows that the magnetic energy is  bounded from below for all time, by a constant that depends only on the magnetic helicity of the initial datum.  
\end{remark}

\begin{theorem}[\bf Local existence in Sobolev spaces]
\label{thm:local}
Let $\gamma \geq 0$ and $s>d/2+1$. Assume that $B_0 \in H^s(\TT^d)$ is divergence free. Then, there exists $T_* \geq ( C \norm{B_0}_{H^s})^{-2}$, such that the active vector equation \eqref{eq:MRE} has a unique solution $B \in C^0( [0,T_*); H^s(\TT^d))$, with associated velocity $u \in C^0 ( [0,T_*); H^{s-1+2\gamma}(\TT^d)) \cap   L^2( (0,T_*); H^{s+\gamma}(\TT^d))$. Moreover,   $B$ satisfies the bound \eqref{eq:energy:a} and also
\begin{align}
\norm{B(\cdot,t)}_{\dot{H}^s}^2 \leq \norm{B_0}_{\dot{H}^s}^2 \exp\left( C \int_0^{t} \norm{\nabla u(\cdot,s)}_{L^\infty} + \norm{\nabla B(\cdot,s)}_{L^\infty}^2 ds \right)
\label{eq:blowup:criterion}
\end{align}
for $t \in [0,T_*)$, where $C>0$ is a constant which only depends on $s$, $\gamma$, and $d$. 
\end{theorem}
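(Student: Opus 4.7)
The plan is to follow the classical energy-method pattern for symmetric hyperbolic-type systems: construct smooth approximate solutions, derive uniform $L^2$ and $H^s$ a priori bounds, pass to the limit using compactness, and prove uniqueness via a low-regularity energy estimate on the difference. Concretely, I would mollify the constitutive law \eqref{eq:B:to:u} (for example replace $B\otimes B$ by $J_\eps B\otimes J_\eps B$ where $J_\eps$ is a Fourier cutoff or standard mollifier) and the transport nonlinearity by a structure-preserving regularization so that the approximate system has a globally defined smooth divergence-free solution $B^\eps$, still satisfies the basic energy identity \eqref{eq:energy}, and has $u^\eps$ related to $B^\eps$ by the same gain of $2\gamma$ derivatives as in \eqref{eq:B:to:u}.

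The heart of the argument is the a priori bound \eqref{eq:blowup:criterion} applied to the approximations uniformly in $\eps$. Apply $\Lambda^s$ to \eqref{eq:B:evo}, pair with $\Lambda^s B^\eps$ in $L^2$, and use $\div u^\eps = \div B^\eps = 0$ to kill the transport pieces with all derivatives on $B^\eps$. What remains are the two commutator-type quantities $\int [\Lambda^s, u^\eps\!\cdot\!\nabla] B^\eps \cdot \Lambda^s B^\eps$ and $\int \Lambda^s(B^\eps\!\cdot\!\nabla u^\eps)\cdot \Lambda^s B^\eps$. I would control the first via the standard Kato--Ponce commutator inequality and the second via a fractional Leibniz rule; both produce terms of the schematic form $\|\nabla u^\eps\|_{L^\infty}\|B^\eps\|_{\dot H^s}^2$ and $\|u^\eps\|_{\dot H^s}\|\nabla B^\eps\|_{L^\infty}\|B^\eps\|_{\dot H^s}$. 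The constitutive law \eqref{eq:B:to:u} is then used to trade $\|u^\eps\|_{\dot H^s}$ (and $\|\nabla u^\eps\|_{L^\infty}$) for norms of $B^\eps$: writing $u^\eps=(-\Delta)^{-\gamma}\Proj\div(B^\eps\otimes B^\eps)$ together with Sobolev embeddings appropriate to $s>d/2+1$ gives $\|\nabla u^\eps\|_{L^\infty}\lesssim \|B^\eps\|_{H^s}^2$ and (for the relevant norm of $u^\eps$) a bound by $\|B^\eps\|_{H^s}\|\nabla B^\eps\|_{L^\infty}$. Combining yields
\[
\tfrac{d}{dt}\|B^\eps\|_{\dot H^s}^2 \lesssim \bigl(\|\nabla u^\eps\|_{L^\infty}+\|\nabla B^\eps\|_{L^\infty}^2\bigr)\|B^\eps\|_{\dot H^s}^2 \lesssim \|B^\eps\|_{H^s}^4,
\]
which by an ODE comparison produces the existence time $T_\ast \geq (C\|B_0\|_{H^s})^{-2}$. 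The corresponding regularity $u^\eps \in L^\infty_t H^{s-1+2\gamma} \cap L^2_t H^{s+\gamma}$ follows by inspecting the constitutive law and using the dissipative bound \eqref{eq:energy:a}. Aubin--Lions-type compactness (using the $H^s$ bound on $B^\eps$ and the equation to bound $\partial_t B^\eps$ in a negative Sobolev norm) provides a subsequence converging strongly in $C^0_t H^{s'}$ for any $s'<s$, which is enough to pass to the limit in all nonlinear terms and obtain a solution; continuity in the strong topology, i.e.\ $B\in C^0_t H^s$, follows from the Gr\"onwall inequality applied to \eqref{eq:blowup:criterion} combined with weak-$*$ lower semicontinuity and the usual strong/weak continuity argument.

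The main obstacle is the presence of the cubic nonlinearity together with the regularity mismatch between $B$ and $u$ when $\gamma<1/2$. For $\gamma\ge 1/2$ the operator $(-\Delta)^{-\gamma}\Proj\div$ is of nonpositive order, so $\|u\|_{\dot H^s}\lesssim \|B\|_{\dot H^s}\|B\|_{L^\infty}$ is immediate and the estimate above closes transparently; in the Darcy-type case $\gamma=0$ one must carefully exploit the divergence form $u=\Proj\div(B\otimes B)$ and distribute derivatives symmetrically between the two factors of $B$ (using $H^s$-algebra-type bounds valid for $s>d/2+1$) in order to absorb $\|u\|_{\dot H^s}\|\nabla B\|_{L^\infty}$ into the right-hand side of the Gr\"onwall inequality for $\|B\|_{H^s}^2$. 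Finally, uniqueness is obtained by an $L^2$ estimate on $\delta B = B^{(1)}-B^{(2)}$: using the constitutive law to express $\delta u$ in terms of $\delta B, B^{(1)}, B^{(2)}$ and integrating by parts (using $\div B^{(i)}=\div u^{(i)}=0$) gives $\frac{d}{dt}\|\delta B\|_{L^2}^2 \lesssim M(t)\|\delta B\|_{L^2}^2$ with $M(t)$ controlled by $\|B^{(i)}\|_{H^s}$, so $\delta B\equiv 0$ by Gr\"onwall.
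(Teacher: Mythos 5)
The heart of the paper's a priori estimate is not an estimate on the $B$-equation in isolation but an energy-structure argument that simultaneously pairs $\Lambda^s$ of the constitutive law with $\Lambda^s u$: one adds $\norm{u}_{\dot{H}^{s+\gamma}}^2 = \int \Lambda^s u\cdot \Lambda^s(B\cdot\nabla B+\nabla p)$ to $\tfrac12\tfrac{d}{dt}\norm{B}_{\dot H^s}^2$, commutes $\Lambda^s$ past the advection operators, and observes that the two top-order pieces cancel exactly because $\int \Lambda^s u\cdot(B\cdot\nabla\Lambda^s B)+\int\Lambda^s B\cdot(B\cdot\nabla\Lambda^s u)=\int B\cdot\nabla(\Lambda^s u\cdot\Lambda^s B)=0$ when $\div B=0$. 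Only the three pure commutators in \eqref{eq:Hs:energy:1} survive; they are handled by Kato--Ponce, and the resulting $\norm{u}_{\dot H^s}$ factors are absorbed into the dissipative term $\norm{u}_{\dot H^{s+\gamma}}^2$ on the left using $\gamma\geq 0$, Poincar\'e, and Young's inequality.

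Your proposal skips this entirely. You apply $\Lambda^s$ to \eqref{eq:B:evo} alone and pair with $\Lambda^s B$, so the stretching term $\int\Lambda^s(B\cdot\nabla u)\cdot\Lambda^s B$ must be estimated directly, and your claimed output of the fractional Leibniz rule is incorrect: Leibniz applied to $B\cdot\nabla u$ gives $\norm{B}_{\dot H^s}\norm{\nabla u}_{L^\infty}+\norm{B}_{L^\infty}\norm{\nabla u}_{\dot H^s}$, so the top-order piece $\int(B\cdot\nabla\Lambda^s u)\cdot\Lambda^s B$ costs $s+1$ derivatives of $u$ (equivalently, $s+1$ derivatives of $B$ after integrating by parts), \emph{not} the $\norm{u}_{\dot H^s}\norm{\nabla B}_{L^\infty}$ you write. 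When $\gamma=0$ this is a genuine two-derivative loss ($\norm{u}_{\dot H^{s+1}}\lesssim\norm{B\otimes B}_{\dot H^{s+2}}$) that cannot be recovered by ``distributing derivatives symmetrically between the two factors of $B$'': the product $B\otimes B$ has no smoothing, and the contributions to $\Lambda^{s+1}(B\otimes B)$ with $s+1$ derivatives on one factor are unavoidable. The same obstruction persists for all $\gamma<1$, so as set up your $H^s$ estimate does not close in the stated range $\gamma\geq 0$. (Your observation that $(-\Delta)^{-\gamma}\Proj\div$ has nonpositive order for $\gamma\geq 1/2$ controls $\norm{u}_{\dot H^s}$ but not the $\norm{u}_{\dot H^{s+1}}$ that actually arises.) The mollification, Aubin--Lions compactness, and $L^2$ uniqueness skeleton you describe is standard and consistent with the paper, but the cancellation $\int B\cdot\nabla(\Lambda^s u\cdot\Lambda^s B)=0$ is the missing key idea, and without it the proof has a gap.
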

\begin{proof}[Proof of Theorem~\ref{thm:local}]
We use the notation $\Lambda = (-\Delta)^{1/2}$, and $\brak{A,B} = AB - BA$ for the commutator of two operators. From \eqref{eq:MRE}, we then obtain
\begin{align}
&\frac 12 \frac{d}{dt} \norm{B}_{\dot{H}^s}^2 + \norm{u}_{\dot{H}^{s+\gamma}}^2 \notag\\
&=  \int_{\TT^d} \Lambda^s u \cdot \Lambda^s (B\cdot \nabla B + \nabla p) 
+ \int_{\TT^d} \Lambda^s B \cdot \Lambda^s (B\cdot \nabla u) 
-   \int_{\TT^d} \Lambda^s B \cdot \Lambda^s (u\cdot \nabla B) 
\notag\\
&=  
\int_{\TT^d} \Lambda^s u \cdot \Lambda^s (B\cdot \nabla B)
+ \int_{\TT^d} \Lambda^s B \cdot (B\cdot \nabla \Lambda^s u)
+ \int_{\TT^d} \Lambda^s B \cdot \brak{\Lambda^s, B\cdot \nabla} u   
-   \int_{\TT^d} \Lambda^s B \cdot \brak{\Lambda^s, u\cdot \nabla} B
 \notag\\
&=  
\int_{\TT^d} \Lambda^s u \cdot \brak{\Lambda^s , B\cdot \nabla} B
+\int_{\TT^d} \Lambda^s B \cdot \brak{\Lambda^s, B\cdot \nabla} u   
-   \int_{\TT^d} \Lambda^s B \cdot \brak{\Lambda^s, u\cdot \nabla} B \,.
\label{eq:Hs:energy:1}
\end{align}
Now, from~\cite[Corollary 5.2, equation (5.1)]{Li19}, by choosing $p=p_1=p_4=2$ and $p_2=p_3=\infty$, this result states that for all $s>0$ we have the following generalization of the Kato-Ponce commutator estimate:
\begin{align}
\label{eq:Kato:Ponce}
\norm{ \brak{\Lambda^s ,f} g }_{L^2} \lesssim  \norm{\Lambda^s f}_{L^2} \norm{g}_{L^\infty} + \norm{\nabla f}_{L^\infty} \norm{\Lambda^{s-1} g}_{L^2} 
\,.
\end{align}
Applying the estimate \eqref{eq:Kato:Ponce} for the pairs $(f,g) \in \{ (B, \nabla B), (B, \nabla u), (u,\nabla B) \}$, since $\brak{\nabla ,\Lambda^s} = 0$ we obtain that 
\begin{subequations}
\label{eq:Kato:Ponce:1}
\begin{align}
\norm{\brak{\Lambda^s , B\cdot \nabla} B}_{L^2} &\les  \norm{B}_{\dot{H}^s} \norm{\nabla B}_{L^\infty} \\
\norm{\brak{\Lambda^s , B\cdot \nabla} u}_{L^2} + \norm{\brak{\Lambda^s , u\cdot \nabla} B}_{L^2} &\les  \norm{B}_{\dot{H}^s} \norm{\nabla u}_{L^\infty} + \norm{\nabla B}_{L^\infty} \norm{u}_{\dot{H}^s} 
\,.
\end{align}
\end{subequations} 
By combining \eqref{eq:Hs:energy:1} and \eqref{eq:Kato:Ponce:1}, we arrive at 
\begin{align}
\frac 12 \frac{d}{dt} \norm{B}_{\dot{H}^s}^2 + \norm{u}_{\dot{H}^{s+\gamma}}^2 
\lesssim  \norm{u}_{\dot{H}^s} \norm{B}_{\dot{H}^s} \norm{\nabla B}_{L^\infty} + \norm{B}_{\dot{H}^s}^2 \norm{\nabla u}_{L^\infty}
\,.
\label{eq:Hs:energy:2}
\end{align}
Since $\gamma \geq 0$ and $u$ has zero mean on $\TT^d$ we have that $\norm{u}_{\dot{H}^s} \lesssim \norm{u}_{\dot{H}^{s+\gamma}}$, while the condition $s>d/2+1$ implies that $\norm{\nabla u}_{L^\infty} \lesssim \norm{u}_{\dot{H}^s}$ and $\norm{\nabla B}_{L^\infty} \lesssim  \norm{B}_{\dot{H}^s}$. Thus, estimate \eqref{eq:Hs:energy:2} readily implies that there exists a constant $C = C(\gamma,s,d)> 0$ such that 
\begin{align}
\frac{d}{dt} \norm{B}_{\dot{H}^s}^2 + \norm{u}_{\dot{H}^{s+\gamma}}^2 
\leq C \norm{B}_{\dot{H}^s}^4
\,.
\label{eq:Hs:energy:3}
\end{align}
From the a-priori estimates \eqref{eq:energy} and \eqref{eq:Hs:energy:3}, the local existence of $C^0_t H^s_x$ solutions of \eqref{eq:MRE} readily follows from a standard approximation procedure, and the local time of existence is at least as large as $( C \norm{B_0}_{H^s})^{-2}$. Note that since $H^{s-1}$ is an algebra, we immediately obtain from \eqref{eq:u:def} that $u \in C^0_t H^{s-1+2 \gamma}_x$, while from \eqref{eq:Hs:energy:3} we obtain that $u \in L^2_t H^{s+\gamma}_x$. Interestingly, when $\gamma \geq 1$, the former information (the uniform in time one) provides more regularity in space than the latter one (the integrated in time one). The bound \eqref{eq:blowup:criterion} is an immediate consequence of \eqref{eq:Hs:energy:2}, since $s>d/2+1$ and $\gamma \geq 0$. 
\end{proof}

\section{Global existence  for   $\gamma > d/2 + 1 $}
\label{sec:global:Sobolev}

\begin{theorem}[\bf Global existence for the strongly regularized system]
\label{thm:global:regularized}
Let $\gamma,s > d/2+1$. Assume that $B_0 \in H^s(\TT^d)$ is divergence free. Then, the local in time solution established in Theorem~\ref{thm:local} is in fact global in time, meaning that $T_* = + \infty$, and we have that
\begin{align}
\norm{B(\cdot,t)}_{\dot{H}^s}^2 
&\leq \norm{B_0}_{\dot{H}^s}^2  \exp\left(C  t^{1/2} \norm{B_0}_{L^2} \right) \notag\\
&\quad \times
\exp\left(C  t \left( \norm{\nabla B_0}_{L^\infty}^2 + C  t^2  \norm{B_0}_{L^\infty}^6 \right) \exp\left(C  t^{1/2} \norm{B_0}_{L^2} \right)  \right)
\label{eq:global:Hs} 
\end{align}
for all $t\geq 0$, where $C=C(\gamma,s,d)>0$ is a constant.
\end{theorem}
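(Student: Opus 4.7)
The plan is to show that the local solution from Theorem~\ref{thm:local} cannot break down in finite time, by controlling the right-hand side of the blow-up criterion~\eqref{eq:blowup:criterion}. Concretely, it suffices to bound
\[
\int_0^t \norm{\nabla u(\cdot,s)}_{L^\infty} + \norm{\nabla B(\cdot,s)}_{L^\infty}^2 \, ds
\]
by an explicit continuous function of $t$, $\norm{B_0}_{L^2}$, $\norm{B_0}_{L^\infty}$, and $\norm{\nabla B_0}_{L^\infty}$; plugging such a bound into \eqref{eq:blowup:criterion} then yields \eqref{eq:global:Hs}. The key structural point is that the hypothesis $\gamma > d/2+1$ promotes the weak energy identity \eqref{eq:energy:a} into pointwise control on $\nabla u$ and $\nabla^2 u$.

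The first ingredient is a pair of velocity bounds. Using the constitutive law $u = (-\Delta)^{-\gamma}\Proj\div(B\otimes B)$, which is a smoothing operator of order $2\gamma-1$, together with the observation that $2\gamma-1 > d/2+2$ whenever $\gamma > d/2+1$ and $d\geq 2$, Sobolev embedding gives
\[
\norm{\nabla u}_{L^\infty} + \norm{\nabla^2 u}_{L^\infty} \lesssim \norm{u}_{H^{2\gamma-1}} \lesssim \norm{B \otimes B}_{L^2} \lesssim \norm{B}_{L^\infty} \norm{B}_{L^2} \,.
\]
In parallel, the Sobolev bound $\norm{\nabla u}_{L^\infty} \lesssim \norm{u}_{\dot H^\gamma}$ (using the zero mean of $u$), combined with \eqref{eq:energy:a} and Cauchy-Schwarz, produces
\[
\int_0^t \norm{\nabla u(\cdot,s)}_{L^\infty}\, ds \leq C\, t^{1/2}\norm{B_0}_{L^2},
\]
which is responsible for the outer $\exp(Ct^{1/2}\norm{B_0}_{L^2})$ factor in \eqref{eq:global:Hs}.

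With these in hand, I propagate $\norm{B}_{L^\infty}$ and $\norm{\nabla B}_{L^\infty}$ along the Lagrangian flow of $u$. Since $\frac{D}{Dt}B = B\cdot\nabla u$, Gronwall immediately gives $\norm{B(\cdot,t)}_{L^\infty} \leq \norm{B_0}_{L^\infty}\exp(Ct^{1/2}\norm{B_0}_{L^2})$. Differentiating \eqref{eq:B:evo} in space and reading it along characteristics yields
\[
\frac{d}{dt} \norm{\nabla B}_{L^\infty} \lesssim \norm{\nabla u}_{L^\infty}\norm{\nabla B}_{L^\infty} + \norm{B}_{L^\infty}\norm{\nabla^2 u}_{L^\infty},
\]
and combining Gronwall with the second-derivative bound on $u$ together with the elementary inequality $\norm{B_0}_{L^2} \lesssim \norm{B_0}_{L^\infty}$ on $\TT^d$ gives
\[
\norm{\nabla B(\cdot,t)}_{L^\infty} \lesssim \bigl( \norm{\nabla B_0}_{L^\infty} + t\,\norm{B_0}_{L^\infty}^3 \bigr) \exp\bigl(Ct^{1/2}\norm{B_0}_{L^2}\bigr).
\]
Squaring and integrating in time produces a bound for $\int_0^t\norm{\nabla B}_{L^\infty}^2\,ds$ of precisely the form that appears inside the second exponential in \eqref{eq:global:Hs}, and feeding everything into \eqref{eq:blowup:criterion} concludes the proof.

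The only genuinely delicate point is the estimate $\norm{\nabla^2 u}_{L^\infty} \lesssim \norm{B}_{L^\infty}\norm{B}_{L^2}$, and this is where the assumption $\gamma > d/2+1$ is really used: one needs $(-\Delta)^{-\gamma}\Proj\div$ to gain more than $d/2+2$ derivatives, so as to send a product of two $L^\infty \cap L^2$ vector fields all the way into $C^2$. This step collapses for the physically natural values $\gamma\in\{0,1\}$, which is consistent with the fact that unconditional global existence in that regime remains open.
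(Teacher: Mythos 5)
Your proof is correct and takes essentially the same route as the paper: the blow-up criterion from Theorem~\ref{thm:local}, the time-integrated $\norm{\nabla u}_{L^\infty}$ bound from the energy inequality \eqref{eq:energy:a}, the transport estimates for $\norm{B}_{L^\infty}$ and $\norm{\nabla B}_{L^\infty}$, and a pointwise-in-time bound on $\norm{\nabla^2 u}_{L^\infty}$ fed into Gronwall. The one minor variation is at that last step: you obtain $\norm{\nabla^2 u}_{L^\infty}\lesssim\norm{u}_{H^{2\gamma-1}}\lesssim\norm{B}_{L^\infty}\norm{B}_{L^2}$ via Sobolev embedding (which needs $\gamma>d/4+3/2$, implied by $\gamma>d/2+1$ when $d\geq2$), whereas the paper bounds the negative-order multiplier $\nabla^2(-\Delta)^{-\gamma}\Proj\div$ directly on $L^\infty$ to get $\norm{\nabla^2 u}_{L^\infty}\lesssim\norm{B}_{L^\infty}^2$ under the weaker requirement $\gamma>3/2$; both yield the same final estimate once $\norm{B_0}_{L^2}\lesssim\norm{B_0}_{L^\infty}$ is used.
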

\begin{proof}[Proof of Theorem~\ref{thm:global:regularized}]
Estimate~\eqref{eq:blowup:criterion} shows that the local in time $H^s$ solution may be uniquely continued past $T$ if $ \int_0^{T} \norm{\nabla u(\cdot,s)}_{L^\infty} + \norm{\nabla B(\cdot,s)}_{L^\infty}^2 ds < \infty$.
Thus, the global existence of smooth solutions is established if we show that the Lipschitz norm of $u$ is integrable in time, and that the Lipschitz norm of $B$ is square integrable in time.

The condition $\gamma > 1 + d/2$ implies by the Sobolev embedding that $H^\gamma \subset {\rm Lip}$. Thus, from \eqref{eq:energy:a} we deduce   
\begin{align}
\int_0^t \norm{\nabla u(\cdot,s)}_{L^\infty} ds 
\lesssim   \int_0^t \norm{u(\cdot,s)}_{\dot{H}^\gamma} ds 
\lesssim    t^{1/2} \norm{B_0}_{L^2}  \,.
\label{eq:u:Lip:good}
\end{align}
Once $u$ satisfies \eqref{eq:u:Lip:good},  we may use the following classical fact: the solution $B$ of \eqref{eq:B:evo} is given by the vector transport formula
\begin{align}
B(X(\alpha,t),t) = \nabla_\alpha X(\alpha,t) B_0(\alpha) 
\label{eq:vector:transport}
\end{align}
where $X(\alpha,t)$ is the $\TT^d$-periodic flow of the vector field $u$; that is, the solution of the ODEs
\begin{align}
\frac{d}{dt} X(\alpha,t) = u (X(\alpha,t),t), \qquad X(\alpha,0) = \alpha\,
\label{eq:Lagrangian:ODEs}
\end{align}
and $\alpha \in \TT^d$ denotes a {\em Lagrangian  label}. Differentiating \eqref{eq:Lagrangian:ODEs} with respect to $\alpha$ and appealing to \eqref{eq:u:Lip:good}, we deduce that 
\begin{align}
\norm{\nabla X(\cdot,t)}_{L^\infty} 
\leq \exp\left( \int_0^t \norm{\nabla u(\cdot,s)}_{L^\infty} ds \right) 
&\leq \exp\left( C t^{1/2} \norm{B_0}_{L^2}  \right) 
\,.
\label{eq:nabla:X:bdd}
\end{align}
Thus, upon composing \eqref{eq:vector:transport} with the back-to-labels map $X^{-1}(x,t)$, and appealing to \eqref{eq:nabla:X:bdd}, we obtain that 
\begin{align}
\norm{B(\cdot,t)}_{L^\infty} \leq \norm{B_0}_{L^\infty}  \exp\left( C  t^{1/2} \norm{B_0}_{L^2}  \right) 
\label{eq:B:bounded}
\end{align}
for all $t>0$. 

It remains to estimate the $L^\infty$ norm of  $\nabla B$. For this purpose we differentiate \eqref{eq:B:evo} with respect to $x$, and contract the resulting equation with $\nabla B$ to deduce
\begin{align*}
(\partial_t + u\cdot\nabla)  |\nabla B|^2 \leq  4 |\nabla u| |\nabla B|^2 + 2 |\nabla^2 u| |B| |\nabla B| \,.
\end{align*}
By the maximum principle, we obtain that
\begin{align}
\norm{\nabla B(\cdot,t)}_{L^\infty} 
&\leq \norm{\nabla B_0}_{L^\infty}  \exp\left(2 \int_0^t \norm{\nabla u(\cdot,s)}_{L^\infty} ds \right) \notag\\
&\qquad + \int_0^t \norm{\nabla^2 u(\cdot,s)}_{L^\infty} \norm{B(\cdot,s)}_{L^\infty}   \exp\left(2 \int_s^t \norm{\nabla u(\cdot,s')}_{L^\infty} ds' \right) ds
\label{eq:nablaB:ode}
\,.
\end{align}
Thus, we need a bound on the $L^\infty$ norm of the Hessian of $u$. For this purpose we note that the condition $\gamma > d/2+1$ trivially implies that $\gamma > 3/2$, and thus the bound
\begin{align*}
\norm{\nabla^2 (-\Delta)^{- \gamma} \Proj \div \varphi}_{L^\infty} &\lesssim  \norm{\varphi}_{L^\infty}  
\end{align*}
holds for every integrable  $\TT^d$-periodic $2$-tensor $\varphi$. Combining the above estimate with the constitutive law \eqref{eq:B:to:u}, we obtain
\begin{align*}
\norm{\nabla^2 u(\cdot,t)}_{L^\infty} 
\lesssim  \norm{B(\cdot,t)\otimes B(\cdot,t)}_{L^\infty} 
\lesssim  \norm{B(\cdot,t)}_{L^\infty}^2 
\,,
\end{align*}
for all $t>0$. From the above display, \eqref{eq:u:Lip:good}, \eqref{eq:B:bounded}, and \eqref{eq:nablaB:ode} we deduce
\begin{align}
\norm{\nabla B(\cdot,t)}_{L^\infty} 
&\leq \left( \norm{\nabla B_0}_{L^\infty} + C  t  \norm{B_0}_{L^\infty}^3 \right) \exp\left(C  t^{1/2} \norm{B_0}_{L^2} \right) 
\label{eq:nablaB:bound}
\,,
\end{align}
for all $t>0$, where $C>0$ is a sufficiently large constant which depends on $\gamma$ and $d$.

The bounds \eqref{eq:u:Lip:good} and \eqref{eq:nablaB:bound} conclude the proof of global existence of $H^s$ solutions. Taking into account the estimate \eqref{eq:blowup:criterion} we also obtain the bound \eqref{eq:global:Hs}.
\end{proof}

\begin{remark}
The condition $\gamma > d/2+1$ implies that   for $0 < \epsilon < 2 (\gamma - d/2-1)$ we have the bound
\begin{align}
\norm{\nabla (-\Delta)^{- \gamma} \Proj \div \varphi}_{C^\epsilon} &\lesssim \norm{\varphi}_{L^1}  
\label{eq:Sobolev:again}
\end{align}
holds for every integrable  $\TT^d$-periodic $2$-tensor $\varphi$. Combining \eqref{eq:Sobolev:again}, \eqref{eq:B:to:u}, and \eqref{eq:energy:a} we first deduce that 
\begin{align}
\norm{\nabla u(\cdot,t)}_{C^\epsilon} 
\lesssim \norm{B(\cdot,t)\otimes B(\cdot,t)}_{L^1} 
\lesssim   \norm{B(\cdot,t)}_{L^2}^2 
\lesssim  \norm{B_0}_{L^2}^2 \,,
\label{eq:u:Lip}
\end{align}
for all $t>0$. We note that this bound is pointwise in time, in contrast to \eqref{eq:u:Lip:good} which is time integrated. 
\end{remark}

\section{Convergence as $t\to \infty$   for   $\gamma > d/2 + 1 $}
\label{sec:t:to:infinity}
In view of Theorem~\ref{thm:global:regularized}, we know that if the initial datum lies in $H^s(\TT^d)$ and the regularization parameter $\gamma$ in \eqref{eq:B:to:u} is sufficiently large, namely $\gamma> d/2+1$, then the system \eqref{eq:MRE} has global existence of solutions. In this section we discuss the possible behavior of these solutions as $t\to \infty$. 

Our first result shows that as $t\to \infty$, the velocity field $u(\cdot,t)$ converges to $0$. 
\begin{theorem}[\bf Asymptotic behavior for the velocity]
\label{thm:velocity:relaxation}
Let $\gamma,s > d/2+1$ and assume that $B_0 \in H^s(\TT^d)$ is divergence free. 
Then the zero mean velocity field $u$ associated to the magnetic field $B \in C^0([0,\infty);H^s(\TT^d))$ has the property that 
\begin{align}
\lim_{t\to \infty} \norm{\nabla u (\cdot,t)}_{L^\infty} = 0 \,.
\label{eq:velocity:relaxation}
\end{align}
\end{theorem}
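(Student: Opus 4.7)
The plan is to combine two ingredients: the time-integrability of $\|u(\cdot,t)\|_{\dot H^\gamma}^2$ coming from the energy identity \eqref{eq:energy:a}, together with an equicontinuity property of $t\mapsto u(\cdot,t)$ that rules out rapid oscillation. From \eqref{eq:energy:a} we have $\int_0^\infty \|u(\cdot,t)\|_{\dot H^\gamma}^2\, dt \leq \tfrac{1}{2}\|B_0\|_{L^2}^2$; since $\gamma-1>d/2$, the Sobolev embedding gives $\|\nabla u\|_{L^\infty} \les \|u\|_{\dot H^\gamma}$, so
\begin{equation*}
\int_0^\infty \|\nabla u(\cdot,t)\|_{L^\infty}^2\, dt < \infty .
\end{equation*}
The task reduces to proving enough time continuity of $\nabla u$ in $L^\infty$ that this $L^2_t$-integrability forces $\|\nabla u(\cdot,t)\|_{L^\infty} \to 0$.

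The main step is a uniform Lipschitz bound of the form $\|u(\cdot,t) - u(\cdot,s)\|_{L^\infty} \les (t-s)\|B_0\|_{L^2}^4$. By \eqref{eq:B:to:u},
\begin{equation*}
u(\cdot,t) - u(\cdot,s) = (-\Delta)^{-\gamma}\Proj\, \div \int_s^t \partial_\tau (B\otimes B)\, d\tau .
\end{equation*}
Using \eqref{eq:B:evo} with $\div u = \div B = 0$, a short computation yields
\begin{equation*}
\partial_\tau (B\otimes B) = -\div(u\otimes B\otimes B) + (B\cdot\nabla u)\otimes B + B\otimes(B\cdot\nabla u) ,
\end{equation*}
so that $u(t)-u(s)$ splits into $(-\Delta)^{-\gamma}\Proj\, \div\, \div$ acting on the integrated $3$-tensor $\int_s^t u\otimes B\otimes B\, d\tau$, plus $(-\Delta)^{-\gamma}\Proj\, \div$ acting on $2$-tensors bilinear in $B$ and $\nabla u$. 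The hypothesis $\gamma>d/2+1$ is precisely the threshold that makes $(-\Delta)^{-\gamma}\Proj\, \div\, \div$ bounded from $L^1(\TT^d)$ to $L^\infty(\TT^d)$: its Fourier multiplier has size $|k|^{2-2\gamma}$, and $\sum_{k\in\mathbb{Z}^d\setminus\{0\}}|k|^{2-2\gamma}$ converges iff $2\gamma-2>d$. The analogous bound for $(-\Delta)^{-\gamma}\Proj\, \div$ holds under $\gamma>(d+1)/2$ and is automatic. Each inner $L^1$ norm is controlled uniformly in $\tau$ by $\|B_0\|_{L^2}^4$, using $\|u\|_{L^\infty} + \|\nabla u\|_{L^\infty} \les \|B_0\|_{L^2}^2$ (a consequence of the same $L^1\to L^\infty$ estimate and of \eqref{eq:u:Lip}) together with $\|B(\cdot,\tau)\|_{L^2} \leq \|B_0\|_{L^2}$, yielding the claimed Lipschitz bound.

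With $u(\cdot,t)$ uniformly Lipschitz in $L^\infty$ and $\|\nabla u(\cdot,t)\|_{C^\epsilon}$ uniformly bounded for some $\epsilon>0$ (by \eqref{eq:u:Lip}), the Landau--Kolmogorov interpolation $\|\nabla f\|_{L^\infty} \les \|f\|_{L^\infty}^{\epsilon/(1+\epsilon)}\|\nabla f\|_{C^\epsilon}^{1/(1+\epsilon)}$, applied to $f = u(\cdot,t)-u(\cdot,s)$, upgrades $L^\infty$-Lipschitz continuity of $u$ in $t$ to $L^\infty$-H\"older continuity of $\nabla u$ in $t$ with exponent $\theta = \epsilon/(1+\epsilon)\in(0,1)$. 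The conclusion is then a soft argument: if $\limsup_{t\to\infty}\|\nabla u(\cdot,t)\|_{L^\infty} \geq 2\delta$ for some $\delta>0$, a sequence $t_n\to\infty$ realizing the $\limsup$ would, by H\"older continuity, produce disjoint time intervals of length $\gtrsim \delta^{1/\theta}$ on which $\|\nabla u(\cdot,t)\|_{L^\infty} \geq \delta$, contradicting the $L^2_t$ bound established above.

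The main obstacle is the Lipschitz bound on $u$ in $L^\infty$. A naive pointwise-in-time estimate of $\partial_t u$ would require control of $\|\nabla B\|_{L^\infty}$ or $\|B\|_{L^\infty}$, neither of which is uniform in $t$ (the bounds \eqref{eq:B:bounded} and \eqref{eq:nablaB:bound} both grow with $t$). The key observation is that $\partial_\tau B$ is itself a spatial divergence, $\partial_\tau B = \div(u\otimes B - B\otimes u)$; integrated against the smoothing $(-\Delta)^{-\gamma}\Proj\,\div$, this allows one to shift derivatives off $B$ onto the kernel, where the hypothesis $\gamma>d/2+1$ provides the two derivatives of smoothing (in the $L^1\to L^\infty$ sense) needed to absorb them.
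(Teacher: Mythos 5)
Your proof is correct and follows essentially the same route as the paper: both rest on the identity $\partial_t(B\otimes B)=-\div(u\otimes B\otimes B)+(B\cdot\nabla u)\otimes B + B\otimes(B\cdot\nabla u)$, the $L^1\to L^\infty$ (or $L^1\to C^\epsilon$) gain of $(-\Delta)^{-\gamma}\Proj\div$ and $(-\Delta)^{-\gamma}\Proj\div\div$ available for $\gamma>d/2+1$, the uniform $C^\epsilon$ bound \eqref{eq:u:Lip}, and a Barbalat-type combination of $L^2_t$-integrability with equicontinuity in time. The only (minor) difference is the order of the last two steps: the paper first concludes $\|u(\cdot,t)\|_{L^\infty}\to 0$ and then interpolates $\|\nabla u\|_{L^\infty}$ between $\|u\|_{L^\infty}$ and $\|\nabla u\|_{C^\epsilon}$, whereas you interpolate the time increment $u(\cdot,t)-u(\cdot,s)$ first, obtaining Hölder continuity of $t\mapsto\nabla u(\cdot,t)$ in $L^\infty$, and then run Barbalat directly on $\|\nabla u\|_{L^\infty}$.
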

\begin{proof}[Proof of Theorem~\ref{thm:velocity:relaxation}]
The proof is based on the bound  \eqref{eq:u:Lip}, on the energy inequality \eqref{eq:energy:a}, and on a bound for the time derivative of $u$, which we claim satisfies
\begin{align}
 \norm{\partial_t u(\cdot,t)}_{C^\epsilon}  \lesssim \norm{B_0}_{L^2}^4
 \label{eq:dt:u:Holder}
\end{align}
for all $t\geq 0$. In order to prove \eqref{eq:dt:u:Holder} we apply a   time derivative to \eqref{eq:u:def}, and replace $\partial_t B$ in the resulting formula via \eqref{eq:B:evo}, to arrive at
\begin{align*}
(-\Delta)^{\gamma} \p_t u_i - \p_i (\p_t p)
&= \p_t (B_j \p_j B_i) \notag \\
&= \p_j \p_t (B_j B_i) \notag \\
&= \p_j (\p_t B_j B_i + B_j \p_t B_i) \notag \\
&= \p_j \left( (B_k \p_k u_j - u_k \p_k B_j)B_i + B_j (B_k \p_k u_i - u_k \p_k B_i) \right) \notag \\
&= \p_j \left( B_i B_k \p_k u_j + B_j  B_k \p_k u_i - u_k B_i \p_k B_j   - u_k B_j \p_k B_i \right) \notag \\
&= \p_j \left( B_i B_k \p_k u_j + B_j  B_k \p_k u_i\right)  - \p_j \left( u_k \p_k (B_i  B_j) \right)\notag \\
&= \p_j \left( B_i B_k \p_k u_j + B_j  B_k \p_k u_i\right)  - \p_j \p_k \left( u_k  B_i  B_j \right) 
\end{align*}
for every component $i \in \{1,\ldots,d\}$. Therefore,  we have established that
\begin{align*}
 \p_t u  
&= (-\Delta)^{- \gamma} \Proj \div \left( B \otimes (B\cdot \nabla) u + (B\cdot \nabla) u \otimes B \right)  - (-\Delta)^{-\gamma} \Proj \div \div \left(B\otimes B \otimes u \right) \,.
\end{align*}
Since $\gamma > 1 + d/2$, we may again use inequality \eqref{eq:Sobolev:again} along with the Poincar\'e inequality,  and deduce from 
the above   formula for $\partial_t u$ that 
\begin{align*}
\norm{\p_t u (\cdot,t)}_{C^\epsilon} 
&\lesssim  \norm{(B \otimes B \otimes \nabla u)(\cdot,t)}_{L^1} 
+ \norm{(B \otimes B \otimes   u)(\cdot,t)}_{L^1} \notag\\
&\lesssim  \norm{B(\cdot ,t)}_{L^2(\TT^d)}^2 \norm{u(\cdot,t)}_{W^{1,\infty}} \notag\\
&\lesssim  \norm{B_0}_{L^2(\TT^d)}^4\,.
\end{align*}
In the last inequality above we have appealed to \eqref{eq:energy:a} and \eqref{eq:u:Lip}.
Thus, we have shown that \eqref{eq:dt:u:Holder} holds.

In order to conclude the proof, we note that the energy inequality \eqref{eq:energy:a} and the Sobolev embedding $H^\gamma \subset L^\infty$ gives that 
\begin{align*}
\int_0^\infty \norm{u(\cdot,t)}_{L^\infty}^2 &\lesssim  \norm{B_0}_{L^2(\TT^d)}^2 \,.
\end{align*}
Combined with \eqref{eq:dt:u:Holder}, the above estimate shows that 
\begin{align*}
\lim_{t\to\infty} \norm{u(\cdot,t)}_{L^\infty} = 0 \,.
\end{align*}
The conclusion \eqref{eq:velocity:relaxation} now follows by interpolating the $L^\infty$ norm of $\nabla u$ between the $L^\infty$ norm of $u$, which vanishes as $t\to \infty$ as shown above, and the $C^\epsilon$ norm of $\nabla u$, which is uniformly bounded  by \eqref{eq:u:Lip}. 
\end{proof} 

\begin{remark}[\bf Relaxation towards Euler steady states?]
\label{rem:relaxation?}
Since \eqref{eq:velocity:relaxation} shows that $\norm{\nabla u(\cdot,t)}_{L^\infty}$ vanishes as $t\to \infty$, it is tempting to conjecture (as was already done by Moffatt~\cite{Moffatt85}) that as $t\to \infty$ the magnetic field $B(\cdot,t)$ relaxes to a steady state $\bar B$ which solves \eqref{eq:u:def} with the left hand side equals to zero; that is, $\bar B$ is a stationary solution of the incompressible Euler equations. The purpose of this remark is to argue that the information provided in Theorem~\ref{thm:velocity:relaxation} does not appear to be sufficient to conclude this statement. By \eqref{eq:energy:a} and weak compactness we do have the existence of subsequences $t_k \to \infty$ such that the associated magnetic fields $B_k(x) = B(x,t_k)$ converge weakly in $L^2$; say $B_k \rightharpoonup \bar B$, for some (weakly) incompressible vector field $\bar B$. Additionally, \eqref{eq:energy} shows that the sequence $\{\|  B_k\|_{L^2} \}_{k\geq 1}$ is strictly decreasing and non-negative, so that there exists $\bar E \geq 0$ with $\lim_{k\to \infty} \|B_k\|_{L^2} = \bar E$; in fact $\bar E>0$ in view of the Arnold inequality \eqref{eq:Arnold:ineq} as long as the initial datum is topologically nontrivial. Of course, we do not know whether $\|\bar B\|_{L^2}$ equals to $\bar E$, or else we'd have strong $L^2$ convergence as $k \to \infty$. The additional information provided by Theorem~\ref{thm:velocity:relaxation} and \eqref{eq:u:def} gives that $\Proj \div (B_k \otimes B_k) \rightharpoonup 0$. These facts do not, however, seem to imply that $\Proj\div(\bar B \otimes \bar B) = 0$ in the sense of distributions, which would be the condition that $\bar B$ is a stationary Euler flow. Indeed, we may only deduce that 
 \begin{align*}
     \Proj\div(\bar{B} \otimes \bar{B}) 
     &= \Proj\div((\bar{B} - B_k) \otimes \bar{B}) + \Proj\div (\bar{B} \otimes (\bar{B} - B_k)) \notag\\
     &\qquad + \Proj\div(B_k \otimes B_k) - \Proj\div((B_k - \bar{B}) \otimes (B_k  - \bar{B}  ) )
     \,.
 \end{align*}
 The first three terms on the right side of the above do converge to $0$ weakly as $k\to \infty$ (the first two terms by the assumption that $B_k \rightharpoonup \bar B$, and the third term due to Theorem~\ref{thm:velocity:relaxation}). However, we do not have enough information to conclude that the fourth term converges to $0$ as $k\to \infty$; the information that $(B_k - \bar{B}) \otimes (B_k  - \bar{B}  )$ is a symmetric non-negative tensor, which is uniformly bounded in $L^1$, is not sufficient; the enemy is $\|\bar B\|_{L^2} < \bar E$.
 \end{remark}

\begin{remark}[\bf Time integrability of $\norm{\nabla u(\cdot,t)}_{L^\infty}$]
\label{rem:good:bounds?}
From \eqref{eq:energy:a} and the Sobolev embedding $H^\gamma \subset W^{1,\infty}$ we may deduce that $\norm{\nabla u(\cdot,t)}_{L^\infty} \in L^2(0,\infty)$. It remains however an open problem to show that the Lipschitz norm of $u$ decays sufficiently fast to ensure that $\norm{\nabla u(\cdot,t)}_{L^\infty} \in L^1(0,\infty)$. If this faster decay were true, then in view of the $\dot{H}^1$ energy estimate for \eqref{eq:MRE}, which after exploring a few cancelations can be shown to be
\begin{align*}
\frac 12 \frac{d}{dt} \norm{\nabla B}_{L^2}^2 + \norm{u}_{\dot{H}^{\gamma+1}}^2 \leq 3  \norm{\nabla B}_{L^2}^2 \norm{\nabla u}_{L^\infty} \,,
\end{align*}
would imply that $\norm{\nabla B(\cdot,t)}_{L^2}$ is uniformly bounded in time. In turn, such information would be sufficient to extract as $t\to \infty$ limit points $\bar B$, which are stationary solutions of the Euler equations. We note, however, that at least when $\gamma =0$, in Theorem~\ref{thm:growth} we show, for suitable choices of initial data, the $\dot{H}^1$ norm of $B$ does not remain uniformly bounded in time. Thus it is not possible for the Lipschitz norm of $u$ to be integrable in time. Whether this situation is {\em generic} remains an open problem.
\end{remark}

\section{2D stability of the state $B = e_1$ and $u= 0$}
\label{sec:2D:stability}
We consider the MRE system \eqref{eq:MRE} in two space dimensions with $\gamma=0$.  In this section we study the asymptotic stability of the steady state 
$$
B = e_1
\qquad \mbox{and} \qquad 
u =0 \,,
$$ under Sobolev smooth perturbations. We note that for the  MHD system {\em with viscosity but no resistivity} on $\RR^2$, i.e. for \eqref{eq:MHD} with $\Delta u$ added to \eqref{eq:MHD:b}, the stability of $(u ,B) = (0,e_1)$ was proved for Sobolev smooth perturbations with certain admissibility conditions for the initial data of magnetic perturbations in \cite{LinXuZhang15} (see also  \cite{RenWuXiangZhang14} where the admissibility conditions were removed). These works make use of the fact that at the linearized level $u$ satisfies
$$
\p_{t}^2 u - \Delta \p_t u - \p_1^2 u = 0 \,.
$$
For the magnetic relaxation equations \eqref{eq:MRE}, this favorable structure is no longer available since $u$ is completely determined from $B$ through  \eqref{eq:u:def}. Linearizing around $(u ,B) = (0,e_1)$ instead leads to the partially dissipative equation
\begin{equation}
\label{eq:partial:dissipation}
\p_t B = \p_1^2 B \,.
\end{equation}
This motivates the use of a different approach to proving global existence for the perturbations, as in~\cite{Elgindi17}.

The perturbation of the magnetic field around the steady state $e_1$ is written as $b$, i.e. we consider 
\begin{align}
b:= B -  e_1 
\,.
\label{eq:b:def}
\end{align}
From \eqref{eq:u:def}, with $\gamma = 0$, we deduce that 
\begin{align}
\label{eq:velocity}
u = \p_1 b  + v
\end{align}
where the nonlinear part of the velocity, denoted as $v$, is given by 
\begin{align}
\label{def:perturbation:velocity}
    v := b \cdot \nabla b + \nabla p \,, \qquad \div v = 0 \,.
\end{align}
Inserting the ansatz \eqref{eq:b:def}--\eqref{def:perturbation:velocity} into \eqref{eq:MRE} we obtain the evolution equation for the perturbation of the magnetic field
\begin{align*}
    \partial_t b + v \cdot \nabla b - b \cdot \nabla v - \partial_1^2 b = b \cdot \nabla \partial_1 b- \partial_1 b \cdot \nabla b + \partial_1 v
    \, .
\end{align*}
Using \eqref{def:perturbation:velocity} we arrive at the following system for the magnetic perturbation:
\begin{subequations}
     \label{eq:magnetic:perturbation2}
 \begin{align}
     \partial_t b + v \cdot \nabla b - b \cdot \nabla v - \partial_1^2 b &= 2 \Proj ( b \cdot \nabla \partial_1 b) \label{eq:magnetic:perturbation2a}\\
      v &= b \cdot \nabla b + \nabla p  
      \label{eq:magnetic:perturbation2b} \\
     \div v &= \div b = 0 \,.
 \end{align}
 \end{subequations}
Before stating our main theorem, it will be useful to introduce notation for the $x_1$-independent and the $x_1$-dependent components of $b$. As such, for any function $\psi \colon \TT^2 \to \RR$, we define
\begin{align*}
    \po \psi (x_2) &:=  \fint_{\TT} \psi (x_1, x_2 )  dx_1 \\
    \pn \psi (x_1,x_2) &:= \psi (x_1,x_2)- \po \psi (x_2) \,. 
\end{align*}
With this notation, our main result concerning the system~\eqref{eq:magnetic:perturbation2} is:

\begin{theorem}[\bf Stability and relaxation]
\label{theorem:main}
Let $k\geq 4$ and $m \geq k  +9$. Choose $\delta \in (0,1)$. There exists $\varepsilon_0$ such that if 
\begin{align}
\| b_0\|_{H^m} = \varepsilon \leq \varepsilon_0 \,,
\label{eq:b0:Hm}
\end{align}
and $\po b_{0} = 0$, then we have that \eqref{eq:magnetic:perturbation2} has a unique global in time smooth solution $(b,v)$, which satisfies $\norm{b(\cdot,t)}_{L^2} \leq \eps$ and 
\begin{subequations}
\label{eq:thm:stability}
\begin{align}
     &\norm{\pn b(\cdot,t)}_{\dot{H}^k} \leq 4 \varepsilon e^{-(1 - \delta)t}  \label{eq:f:Hk}  \\
     &\norm{\po b_1(\cdot,t)}_{H^{k  +2 }} \leq 4  \varepsilon  \label{eq:a:H:k+2} \\
     &\norm{b(\cdot,t)}_{\dot{H}^m}^2  \leq  4 \eps e^{  \varepsilon t } \label{eq:b:Hm}
 \end{align}
 \end{subequations}
for $t \in [0, \infty)$. As a consequence, the total velocity field   satisfies $u(\cdot,t) \to 0$ as $t\to \infty$, whereas the total magnetic field $B (\cdot,t)= e_1 + b(\cdot,t)$ relaxes to a steady state $\bar B$ with $\|\bar B - e_1\|_{H^{k+2}} \leq 4 \eps$, both convergences taking place  with respect to strong topologies.
\end{theorem}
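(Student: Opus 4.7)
The plan is a standard bootstrap argument in the spirit of Elgindi's approach to IPM~\cite{Elgindi17}, exploiting the partial dissipation $\p_1^2$ visible in the linearization \eqref{eq:partial:dissipation}. Local-in-time existence of an $H^m$ solution is given by Theorem~\ref{thm:local} (applied with $d=2$, $\gamma=0$, $s=m$), so it suffices to propagate \emph{a priori} bounds on the maximal existence interval $[0,T^\ast)$. The structural observation that drives everything is that $\po b_0 = 0$ together with $\div b = 0$ forces $\po b_2 \equiv 0$ for all time: averaging $\p_1 b_1 + \p_2 b_2 = 0$ in $x_1$ gives $\p_2 \po b_2 = 0$, so $\po b_2$ is constant in $x_2$, and its $x_2$-mean vanishes since $\int_{\TT^2} b_0 = 0$. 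Hence $\po b = (\po b_1)\, e_1$ and $\p_1 b = \p_1 \pn b$, so every nonlinear term in \eqref{eq:magnetic:perturbation2} carries at least one factor of $\pn b$; combined with the Poincar\'e inequality $\|\pn f\|_{L^2} \leq \|\p_1 \pn f\|_{L^2}$, this turns $\p_1^2$ into a genuine spectral-gap operator on the range of $\pn$.

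I would run the bootstrap on the maximal interval where \eqref{eq:f:Hk}--\eqref{eq:b:Hm} hold with the constant $4$ replaced by $8$, and close the three bounds with the constant $2$, so that standard continuation forces $T^\ast = +\infty$. For \eqref{eq:f:Hk}, I apply $\pn$ to \eqref{eq:magnetic:perturbation2a} and run a $\dot H^k$ energy estimate: the linear part contributes $2\|\p_1 \pn b\|_{\dot H^k}^2$, and Kato--Ponce commutator bounds of the form \eqref{eq:Kato:Ponce:1} control the nonlinear terms by $\|\pn b\|_{\dot H^k}$ times $\|b\|_{H^{k+2}}$, with the slack $m \geq k+9$ allowing comfortable derivative distribution. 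Poincar\'e in $x_1$ and $\eps_0$ small relative to $\delta$ then yield
\[
\tfrac{d}{dt}\|\pn b\|_{\dot H^k}^2 + 2(1-\delta)\|\pn b\|_{\dot H^k}^2 \leq 0,
\]
which integrates to \eqref{eq:f:Hk}. For \eqref{eq:b:Hm}, the commutator energy estimate already used in the proof of Theorem~\ref{thm:local}, combined with the Sobolev embedding ($k \geq 4$ in $d=2$) and the bootstrap on $\pn b$, gives $\tfrac{d}{dt} \|b\|_{\dot H^m}^2 \leq C\bigl(\|\nabla b\|_{L^\infty} + \|\nabla v\|_{L^\infty}\bigr)\|b\|_{\dot H^m}^2 \leq C\eps \|b\|_{\dot H^m}^2$, since both $\|\nabla b\|_{L^\infty}$ and $\|\nabla v\|_{L^\infty}$ are $O(\eps)$ (the latter because $v$ is quadratic in $b$ with a $\pn b$ factor). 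Gronwall closes \eqref{eq:b:Hm}.

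For \eqref{eq:a:H:k+2}, averaging \eqref{eq:magnetic:perturbation2a} in $x_1$ annihilates the linear $\p_1^2$ dissipation and all explicit $\p_1$ derivatives, leaving pure nonlinear forcing. After expanding $b = \pn b + (\po b_1)\,e_1$, every surviving term is quadratic in $\pn b$ (possibly modulated by $\po b_1$). Estimating in $H^{k+2}$ and interpolating between \eqref{eq:f:Hk} and \eqref{eq:b:Hm},
\[
\tfrac{d}{dt}\|\po b_1\|_{H^{k+2}} \leq C\|\pn b\|_{H^{k+5}}^2 \leq C\eps^2 e^{-2(1-\delta)t + C\eps t},
\]
where the interpolation requires $k+5 \leq m-\text{(a small loss)}$, which is the source of the hypothesis $m \geq k+9$. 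Integrating in time and using $\po b_1(\cdot,0)=0$ then gives \eqref{eq:a:H:k+2} for $\eps_0$ small enough that $2(1-\delta) > C\eps_0$.

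The main obstacle is this last step: one must extract a genuinely quadratic $\pn b$ structure from $\po\bigl[2\Proj(b\cdot\nabla \p_1 b)\bigr]$ despite the fact that $\Proj$ does not commute with $\po$ in an obvious way. I would handle this by writing $\Proj = \Id - \nabla(-\Delta)^{-1}\div$ and using $\div b = 0$ to re-express the divergences as products of gradients of $b$; a direct check then shows that only products of $\pn b$-components (possibly with one $\po b_1$ factor) survive, so the exponential decay of $\pn b$ can be invoked. The final relaxation claim is then immediate from the three bounds: the total velocity is $u = \p_1 b + v = \p_1 \pn b + v$, which decays exponentially below $H^{k-1}$ by \eqref{eq:f:Hk} and the quadratic structure of $v$; and Step~3 shows $\p_t \po b_1 \in L^1_t H^{k+2}$, so $\po b_1(\cdot,t)$ is Cauchy and $B(\cdot,t) \to e_1 + (\po b_1)^\infty e_1 =: \bar B$ strongly in $H^{k+2}$.
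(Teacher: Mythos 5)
Your plan is essentially the paper's plan: split $b$ into $a=\po b_1$ and $f=\pn b$, show $\po b_2\equiv 0$, exploit the Poincar\'e gap for $\p_1$ on the range of $\pn$, run a bootstrap with exponential decay for $f$ in $\dot H^k$, uniform control of $a$ in $H^{k+2}$ by time-integrating a quadratic-in-$f$ forcing, and a mildly growing $\dot H^m$ bound via Gronwall. Your observation that $\po b_2\equiv 0$ follows directly from $\div b = 0$ plus mean-zero is slightly cleaner than the paper's (which propagates $\po b_{2,0}=0$ through the evolution), and your treatment of the $\po\Proj$ issue via $\Proj=\Id-\nabla(-\Delta)^{-1}\div$ is a legitimate alternative to the paper's device of working with the un-projected form $2b\cdot\nabla\p_1 b+\nabla\p_1 p$ and using $\po\p_1=0$ to kill the pressure term outright.

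The one place where the proposal is too thin is your step for \eqref{eq:f:Hk}. You assert that after applying $\pn$ ``the linear part contributes $2\|\p_1\pn b\|_{\dot H^k}^2$'', but the linear (in $f$) operator is not just $\p_1^2$: as the paper makes explicit in Lemma~\ref{lem:pn:b:evo}, it is $L(f) = (1+a)^2\p_1^2 f + (1+a)\nabla\p_1 p_L - \p_2 a\,\p_2 p_L\, e_1$ with the nonlocal pressure $p_L = 2(-\Delta)^{-1}(\p_2 a\,\p_1 f_2)$. These $a$-dependent terms are \emph{not} small in the sense of decaying in time (since $a$ is merely bounded), so you cannot dump them onto the nonlinear side; they must be absorbed into the dissipation using smallness of $a$, and that works only because each of them carries at least one full $\p_1 f$ factor (including through $p_L$). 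Moreover, the energy estimate for $\p_2^k f$ requires commuting $k$ transverse derivatives through the $a$-dependent coefficients, and here one has to be careful that every surviving product has at least one derivative falling on $a$ so that the smallness of $\|a\|_{W^{k+1,\infty}}$ is available --- this is exactly the accounting in Proposition~\ref{prop:linear:decay}. The point survives your plan (it is an energy argument either way, and the paper's Duhamel-plus-propagator route is the same estimate dressed differently), but the phrase ``Kato--Ponce controls the nonlinear terms'' does not cover it; you need the anisotropic structural fact that all the dangerous linear-in-$f$, $a$-weighted terms are proportional to $\p_1 f$ before Poincar\'e can be invoked. Flagging that explicitly would complete the argument.
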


\begin{remark}[\bf Notation]
 For simplicity of notation, throughout the proof of Theorem~\ref{theorem:main}, we shall use the notation:
\begin{subequations}
\label{eq:a:f:w:def}
\begin{align}
a &= a(x_2,t) = \po b_1 (x_2,t) \\
f &= f(x_1,x_2,t) = \pn b (x_1,x_2,t) \\
w &= w(x_1,x_2,t) = \pn v(x_1,x_2,t) \,.
\end{align}
\end{subequations}
We do not introduce new notation for $\po v_1$, but note from \eqref{eq:magnetic:perturbation2b} and the observation that $\po b_2 (\cdot,t)= 0$ (which will be established in Lemma~\ref{lem:po:b:evo} below), we have
\begin{align*}
\po v_1 = \p_2 \po (b_2 b_1) =  \p_2 \po (f_2 (f_1 + a)) =  \p_2 \po (f_2  f_1)
\,.
\end{align*}
The above identity will be used in the analysis below. Note that with the notation in \eqref{eq:a:f:w:def}, we have that the stability estimates in \eqref{eq:thm:stability} become
\begin{align}
\norm{f(\cdot,t)}_{\dot{H}^k} \leq 4 \eps e^{-(1-\delta)t}\,,
\qquad 
\norm{a(\cdot,t)}_{H^{k+2}} \leq 4 \eps \,,
\qquad
\norm{b(\cdot,t)}_{\dot{H}^m} \leq  4 \eps e^{  \varepsilon t } 
\,,
\label{eq:thm:stability:alt}
\end{align}
which are the bounds proven below. 
\end{remark}

\subsection{The evolution equations for $a$ and $f$}
Before turning to the proof of Theorem~\ref{theorem:main}, we need to determine the evolution equations for $a$ and $f$. In turn, this is necessary because the $\p_1^2 b$ dissipative term present in \eqref{eq:magnetic:perturbation2a} may only be expected to cause decay of the part of $b$ which is not constant in the $x_1$ direction, i.e. for $f$. Moreover, the precise coupling between the evolution equations for $a$ and $f$ is crucial to the proof (and is also the reason why in three dimensions this stability result doesn't hold). In this direction, for $a$ we have:

\begin{lemma}[\bf The $a$ evolution]
\label{lem:po:b:evo}
 Assume that $(b,v)$ are smooth solutions of \eqref{eq:magnetic:perturbation2} and $\po b_{0} = 0$. Then, we have 
\begin{align}    
    \partial_t a  
    &= \partial_2\po ( 2 f_2 \partial_1 f_1  + f_2 w_1 - w_2 f_1 ) 
    \label{eq:b:zero:mode}
    =: N'(f,w)
\end{align}
and $\po b_2 (\cdot,t) = 0$. Crucially, $a$ does not appear on the right side of \eqref{eq:b:zero:mode}.
\end{lemma}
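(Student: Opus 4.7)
The plan is to apply the averaging operator $\po$ directly to the first component of \eqref{eq:magnetic:perturbation2a}, exploiting the fundamental identity $\po \p_1 \equiv 0$ on $\TT^2$-periodic functions (in Fourier, $\po$ restricts to the $k_1 = 0$ modes, which annihilates any outer $\p_1$). Before doing so, I first dispose of the auxiliary claim $\po b_2(\cdot,t) \equiv 0$: applying $\po$ to $\div b = 0$ yields $\p_2 \po b_2 = 0$, so $\po b_2$ depends only on $t$; and $\int_{\TT^2} b_2 \, dx$ is conserved under \eqref{eq:B:evo} (integrate after rewriting the right-hand side as a divergence using $\div u = \div B = 0$), so the hypothesis $\po b_0 = 0$ forces $\po b_2 \equiv 0$ for all $t \geq 0$. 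The identical argument, applied to the zero-mean divergence-free field $v = \Proj(b \cdot \nabla b)$, shows $\po v_2 \equiv 0$.

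Next I apply $\po$ to the first component of \eqref{eq:magnetic:perturbation2a}. The dissipative term $\p_1^2 b_1$ is annihilated. Using $\div v = \div b = 0$, the advection and stretching terms rewrite as divergences, and only the $\p_2$-pieces survive:
\begin{align*}
\po(v \cdot \nabla b_1) = \p_2 \po(v_2 b_1), \qquad \po(b \cdot \nabla v_1) = \p_2 \po(b_2 v_1).
\end{align*}
For the projected nonlinearity, the central observation is that in Fourier $\Proj_1 = \delta_{1i} - \p_1 \Delta^{-1} \p_i$, and the outer $\p_1$ is killed by $\po$, so $\po \Proj_1(\cdot) = \po (\cdot)_1$. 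Combined with $b_j \p_j \p_1 b_1 = \p_j(b_j \p_1 b_1)$ (another use of $\div b = 0$), this gives $\po [2 \Proj_1 (b \cdot \nabla \p_1 b)] = 2 \p_2 \po(b_2 \p_1 b_1)$. Assembling the pieces,
\begin{align*}
\p_t a = \p_2 \po \bigl( b_2 v_1 - v_2 b_1 + 2\, b_2 \p_1 b_1 \bigr).
\end{align*}

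The final step is to substitute $b_1 = a + f_1$, $b_2 = f_2$, $v_1 = \po v_1 + w_1$, $v_2 = w_2$ and expand. The cross-terms $\po(w_2 a) = a \cdot \po w_2$ and $\po(f_2 \po v_1) = \po v_1 \cdot \po f_2$ vanish because $\po w = \po f = 0$ by the definition of $\pn$, and $f_2 \p_1 a \equiv 0$ directly since $a$ is $x_1$-independent. What remains inside $\p_2 \po(\cdot)$ is exactly $2 f_2 \p_1 f_1 + f_2 w_1 - w_2 f_1$, proving \eqref{eq:b:zero:mode}; the absence of $a$ from the right-hand side, which is the crucial structural feature enabling the stability mechanism, is an automatic consequence of these cancellations. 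The only genuinely subtle step in the whole calculation is the commutation of $\po$ past $\Proj_1$; everything else is systematic bookkeeping with the divergence-free conditions and the $x_1$-averaging.
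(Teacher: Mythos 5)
Your proof is correct and follows essentially the same strategy as the paper: apply $\po$ to \eqref{eq:magnetic:perturbation2a}, observe that the Leray projector commutes with $\po$ modulo terms killed by $\po\p_1 = 0$, rewrite the advection and stretching terms as divergences using $\div b = \div v = 0$, and substitute the $a$/$f$/$w$ decomposition to watch $a$ cancel out. The only genuine (if minor) deviation is in how you establish the auxiliary facts $\po b_2 \equiv 0$ and $\po v_2 \equiv 0$: you argue upfront from the divergence-free condition (which forces $\po b_2$ and $\po v_2$ to be $x_2$-independent) together with conservation of the spatial mean, whereas the paper extracts $\po b_2 \equiv 0$ from the $i=2$ component of the $\po$-ed equation itself, and obtains $\po v_2 \equiv 0$ via the stream-function representation $v = \nabla^\perp\varphi$. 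Your route to these auxiliary identities is slightly more elementary and self-contained, but the heart of the lemma — the cancellation of $a$ after the substitution — is handled identically.
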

\begin{proof}[Proof of Lemma~\ref{lem:po:b:evo}]
Applying $\po$ to \eqref{eq:magnetic:perturbation2a} and using that $\po \p_1 \psi = 0$ for any periodic $\psi$, gives
\begin{equation}
    \partial_t \po b_i = \partial_2 \po(  b_2 v_i - v_2 b_i  + 2b_2 \partial_1 b_i).
    \label{eq:temp:1}
\end{equation}

When $i=2$, we appeal to the fact that $\po (b_2 \p_1 b_2) = \po \p_1 (b_2^2/2) = 0$, and to the assumption $\po b_{2,0} = 0$, to conclude from \eqref{eq:temp:1} that $\po b_2 (x_2,t) = 0$ for all $t\geq 0$. In particular, this implies that $f_2 = b_2$. Moreover, since $\fint_{\TT^2} f = 0$ and $0 = \div b = \div f$, there exists a periodic stream function $\phi$, such that $f = \nabla^\perp \phi$. In particular, 
$$
f_2 = \p_1 \phi \,.
$$
Similarly, since $\div v = 0$ and $\fint_{\TT^2} v = \fint_{\TT^2} \div ( b\otimes b + p I_2)  = 0$, we obtain that there exists a periodic stream function $\varphi$, such that $v = \nabla^\perp \varphi$. 
In particular,
$$
v_2 = w_2= \p_1 \varphi\,.
$$

With this information, we return to \eqref{eq:temp:1} and set $i=1$. Since $\partial_1 b_1 = \partial_1 f_1$, $b_2 = f_2 $, and $v_2 =w_2$, we have that 
\begin{align*}
\po(b_2 v_1 - v_2 b_1  + 2b_2 \partial_1 b_1) 
&=  \po(f_2 v_1 - w_2 b_1  + 2 f_2 \partial_1 f_1) \\
&=  \po(f_2 w_1 - w_2 f_1  + 2 f_2 \partial_1 f_1) 
+ \po (\pn b_2 \po v_1 - \pn v_2 \po b_1) 
\end{align*}
which establishes \eqref{eq:b:zero:mode}, upon noting that $\po (\pn \psi_1 \po \psi_2) = \po \psi_2 \po (\pn \psi) = 0$ for any   $\psi_1 , \psi_2$.
\end{proof}

The evolution equation for $f$ is more complicated, and is given by the following lemma. 

\begin{lemma}[\bf The $f$ evolution]
\label{lem:pn:b:evo}
Assume that $(b,v)$ are smooth solutions of \eqref{eq:magnetic:perturbation2} and $\po b_{0} = 0$. Then, we have 
  \begin{equation}
\label{eq:full:equation}
    \partial_t f = L (f) + N(f,w)
\end{equation}
where the linear operator $L$ acts on the vector field $f =(f_1, f_2)^T$ as 
\begin{subequations}
\begin{align}
    L (f) &:= ( 1 + a)^2\partial_1^2 f + ( 1 + a )\nabla \partial_1 p_L - \partial_2 a \partial_2 p_L e_1
    \label{eq:linear:operator:vector:form} \\
    p_L &:= p_L(a, f) = 2 (-\Delta)^{-1}(\partial_2 a \partial_1 f_2)
    \label{def:lin:pressure}
\end{align}
\end{subequations}
where as in \eqref{eq:a:f:w:def}, $a = \po b_1$. 
The nonlinear operator $N$ appearing in \eqref{eq:full:equation} is defined as
\begin{subequations}
\begin{align}
    N(f,w) 
    &:= a \partial_1 \pn (f \cdot \nabla f + \nabla p_N) + \pn(f \cdot \nabla w - w \cdot \nabla f ) - \partial_2 \po(f_1 f_2)\partial_1 f \notag\\
    &\quad + 2\pn(f \cdot \nabla \partial_1 f) + \nabla \partial_1 \pn p_N + \left( \partial_2^2 \po(f_1 f_2)f_2 - \partial_2 a \pn(f \cdot \nabla f_2 + \partial_2 p_N) \right) e_1
    \label{def:nonlinearity}
\\
       p_N &:= p_N(f) = 2 (-\Delta)^{-1} ((\partial_1 f_1)^2 + \partial_1 f_2 \partial_2 f_1)).
        \label{def:nonlin:pressure}
\end{align}
\end{subequations}
\end{lemma}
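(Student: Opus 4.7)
The plan is to derive \eqref{eq:full:equation} by applying $\pn$ to the vector equation \eqref{eq:magnetic:perturbation2a} after writing $b = a e_1 + f$ (legitimate since $\po b_2 = 0$ by Lemma~\ref{lem:po:b:evo}) and $v = \po v_1 e_1 + w$, and then sorting the resulting expression into a part linear in $f$ with $a$-dependent coefficients and a remainder. As a first preparatory step, I decompose the constitutive relation \eqref{eq:magnetic:perturbation2b} accordingly as $v = v_L + v_N$, with $v_L := a \partial_1 f + f_2 \partial_2 a\, e_1 + \nabla p_L$ linear in $f$ and $v_N := f\cdot\nabla f + \nabla p_N$ quadratic in $f$; imposing $\div v_L = \div v_N = 0$ (and using $\div f = 0$ and $\partial_1 a = 0$) yields $-\Delta p_L = 2\partial_2 a\, \partial_1 f_2$ and $-\Delta p_N = 2((\partial_1 f_1)^2 + \partial_1 f_2 \partial_2 f_1)$, which are precisely \eqref{def:lin:pressure} and \eqref{def:nonlin:pressure}. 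Moreover $\po v_L = 0$, so $w = v_L + \pn v_N$, and comparing the $x_1$-average of the two sides of \eqref{eq:magnetic:perturbation2b} reproduces the identity $\po v_1 = \partial_2 \po(f_1 f_2)$ noted after \eqref{eq:a:f:w:def}.

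Next, I substitute into \eqref{eq:magnetic:perturbation2a} and apply $\pn$, which commutes with $\Proj$ and with $\partial_2$, and annihilates all $\partial_1$-images. Using $\partial_1 a = 0$, the term $\partial_1^2 b$ becomes $\partial_1^2 f$; the term $2\Proj(b\cdot\nabla \partial_1 b) = 2\Proj(a \partial_1^2 f + f\cdot\nabla \partial_1 f)$ contributes, at linear order in $f$, the expression $2a \partial_1^2 f + \nabla \partial_1 p_L$, after computing $\div(a\partial_1^2 f) = \partial_2 a \partial_1^2 f_2$ and recognizing $\nabla(-\Delta)^{-1}(\partial_2 a \partial_1^2 f_2) = \tfrac12 \nabla \partial_1 p_L$; the term $-v\cdot\nabla b$ contributes $-(v_L)_2 \partial_2 a\, e_1 = -(a \partial_1 f_2 + \partial_2 p_L)\, \partial_2 a\, e_1$ at linear order; and the term $b\cdot\nabla v$ contributes $a \partial_1 v_L = a^2 \partial_1^2 f + a \partial_2 a \partial_1 f_2\, e_1 + a \nabla \partial_1 p_L$. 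Summing, the two $a\partial_2 a \partial_1 f_2\, e_1$ pieces cancel and the remaining terms regroup as
\[
(1+a)^2 \partial_1^2 f + (1+a) \nabla \partial_1 p_L - \partial_2 a\, \partial_2 p_L\, e_1 = L(f),
\]
which is \eqref{eq:linear:operator:vector:form}.

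All remaining quadratic-or-higher contributions are then collected into $N(f,w)$. Writing $v = v_L + \pn v_N + \po v_1\, e_1$, the residual of $b\cdot \nabla v$ after subtracting $a \partial_1 v_L$ is $a \partial_1 \pn v_N + f\cdot \nabla w + f_2 \partial_2 \po v_1 \, e_1$; the residual of $-v\cdot \nabla b$ after subtracting $-(v_L)_2 \partial_2 a\, e_1$ is $-w\cdot \nabla f - \po v_1\, \partial_1 f - (\pn v_N)_2 \partial_2 a\, e_1$; and the quadratic part of $2\Proj(b\cdot \nabla \partial_1 b)$ is $2\pn(f\cdot \nabla \partial_1 f) + \nabla \partial_1 \pn p_N$ (by the same Leray-multiplier identity used for $p_L$, now applied to $p_N$). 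Substituting $\po v_1 = \partial_2 \po(f_1 f_2)$ (so that $f_2 \partial_2 \po v_1 = f_2 \partial_2^2 \po(f_1 f_2)$), rewriting $(\pn v_N)_2 = \pn(f\cdot \nabla f_2 + \partial_2 p_N)$, and applying $\pn$ termwise (which acts as the identity on the already $x_1$-dependent pieces such as $a\partial_1 \pn v_N$), one gathers everything into precisely \eqref{def:nonlinearity}.

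I expect the main obstacle to be the careful accounting of the two distinct Leray pressures and of the $\po/\pn$ splittings: one must be consistent about which pressure contributions come from the constitutive law $b\mapsto v$ (producing $p_L$ and $p_N$) versus those arising from enforcing divergence-freeness of the right-hand side of the evolution equation (producing the $\nabla \partial_1 p_L$ and $\nabla \partial_1 \pn p_N$ terms), and then verify term by term that the delicate cancellation of the two $a\partial_2 a \partial_1 f_2\, e_1$ contributions goes through, yielding the precise factors $(1+a)^2$ in front of $\partial_1^2 f$ and $(1+a)$ in front of $\nabla \partial_1 p_L$ in the final form of $L(f)$.
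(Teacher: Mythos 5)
Your proof is correct and follows essentially the same strategy as the paper: decompose $b=ae_1+f$, split the Leray pressure into $p_L+p_N$, apply $\pn$, and sort terms by their degree in $f$. The only real difference is organizational — you package the linear and quadratic pieces of the constitutive law as two divergence-free fields $v_L, v_N$ up front and then use the commutation of $\Proj$ with $\partial_1$ to produce the $\nabla\partial_1 p_L$ and $\nabla\partial_1 \pn p_N$ contributions directly, whereas the paper substitutes the explicit formula for $w$ component-by-component and splits $\nabla\partial_1 p = \nabla\partial_1 p_L + \nabla\partial_1 p_N$ from the outset; both lead to the same cancellation of the two $a\partial_2 a\,\partial_1 f_2\, e_1$ terms and the same final expressions for $L(f)$ and $N(f,w)$.
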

\begin{proof}[Proof of Lemma~\ref{lem:pn:b:evo}]
We apply $\pn$ to \eqref{eq:magnetic:perturbation2a} to get 
\begin{equation}
    \partial_t f - \partial_1^2 f = \pn( b \cdot \nabla v - v \cdot \nabla b + 2 b \cdot \nabla \partial_1 b + \nabla \partial_1 p )
    \,.
    \label{eq:pt:f:1}
\end{equation}
The goal is to further decompose the right side of \eqref{eq:pt:f:1}, in order to extract from it all local and nonlocal terms which are linear in $f$.

We first determine a decomposition for the pressure. Applying a divergence to \eqref{def:perturbation:velocity} gives
\begin{align}
\label{eq:pressure}
-\Delta p 
= \div (b \cdot \nabla b) 
&= 2 (\partial_1 b_1)^2 + 2 \partial_1 b_2 \partial_2 b_1 \notag\\
&= \underbrace{2( (\partial_1 f_1)^2 + \partial_1 f_2 \partial_2 f_1)}_{=: - \Delta p_N} +  \underbrace{2  \partial_1 f_2 \partial_2 a}_{=: -\Delta p_L} 
\end{align}
where $p_N$ is the pressure which is nonlinear in $f$, and $p_L$ is the pressure which is linear with respect to $f$. Note that both of these pressure terms are uniquely defined once we impose that they have zero mean on $\TT^2$, and that they correspond to definitions \eqref{def:lin:pressure} and \eqref{def:nonlin:pressure}.
 
Next, we compute the velocity in terms of the magnetic perturbation. As noted in Remark~\ref{eq:a:f:w:def}, we may decompose the velocity field as 
 \begin{align}
 \label{eq:velocity:decomposition}
     v_1  = w_1 + \partial_2 \po (f_1 f_2)\,,
     \qquad 
     v_2  = w_2 \, .
 \end{align}
Furthermore, by applying $\pn$ to \eqref{def:perturbation:velocity}, and using \eqref{eq:pressure},  we obtain that 
\begin{align}
    \label{def:u:bar}
    w
    &=\pn ( b_1 \partial_1 b + b_2 \partial_2 b + \nabla p) \notag \\ 
    &= a \partial_1f + \nabla p_L + 
    \partial_2 a f_2 e_1
+  \pn(f \cdot \nabla f + \nabla p_N) \,.
   \end{align}
   With \eqref{def:u:bar} in hand, we now compute the stretching and the advection terms present on the right side of \eqref{eq:pt:f:1}. Indeed, from \eqref{eq:velocity:decomposition} and \eqref{def:u:bar} for the stretching term in \eqref{eq:pt:f:1} we obtain 
\begin{align*}
    \pn(b \cdot \nabla v_1) 
      &= \pn \left( (a +  f_1) \partial_1w_1 + f_2 \partial_2 ( w_1 +  \partial_2 \po(f_1f_2) ) \right) \notag\\
      &=  a \partial_1 w_1 + f_2 \partial_2^2\po(f_1f_2)   +  \pn( f \cdot \nabla w_1 ) \notag \\
      &= a \partial_1( a \partial_1 f_1 + \partial_2 a f_2 + \partial_1p_L) + a \partial_1\pn(f\cdot\nabla f_1  + \partial_1 p_N )   + f_2 \partial_2^2\po(f_1f_2) +  \pn( f \cdot \nabla w_1 ) \notag \\
      &= a^2 \partial_1^2 f_1 + a \partial_2 a \partial_1f_2 + a\partial_1^2 p_L  + a \partial_1\pn(f\cdot\nabla f_1  + \partial_1 p_N )   + f_2 \partial_2^2\po(f_1f_2) +  \pn( f \cdot \nabla w_1 )
\end{align*}
and similarly, 
\begin{align*}
    \pn(b \cdot \nabla v_2) 
    &= \pn((a +  f_1) \partial_1 w_2 +f_2 \partial_2 w_2  ) \notag\\
    &=  a \partial_1w_2 + \pn(f\cdot \nabla w_2) \notag \\
    &= a \partial_1 (a \partial_1 f_2 + \partial_2 p_L) + a \partial_1 \pn( f\cdot\nabla f_2 + \partial_2 p_N)  + \pn(f \cdot \nabla w_2) \notag\\
    &= a^2 \partial_1^2f_2  + a \partial_1\partial_2  p_L + a \partial_1 \pn( f\cdot\nabla f_2 + \partial_2 p_N)  + \pn(f \cdot \nabla w_2)
    \,.
\end{align*}
On the other hand, for the transport term in \eqref{eq:pt:f:1} we have
\begin{align*}
     \pn(v \cdot \nabla b_1) &=  \pn( (w_1 + \p_2 \po(f_1 f_2))  \partial_1 f_1 + w_2 \partial_2 (f_1 + a) )\notag \\
    &= \pn(w \cdot \nabla f_1 ) + \p_2 \po(f_1 f_2) \p_1 f_1 + \p_2 a w_2 \notag \\
    &=a  \partial_2 a \p_1 f_2 + \p_2 a  \partial_2 p_L  + \p_2 a \pn( f\cdot \nabla f_2 + \p_2 p_N) + \p_2 \po(f_1 f_2) \p_1 f_1 + \pn(w \cdot \nabla f_1 )  
    \,,
\end{align*} 
and 
\begin{align*}
    \pn(v \cdot \nabla b_2) &= \pn( (w_1 + \p_2 \po(f_1 f_2))  \partial_1 f_2 + w_2 \partial_2 b_2 ) \notag \\
    &= \partial_2 \po(f_1 f_2) \partial_1 f_2 + \pn(w \cdot \nabla f_2)
    \,.
\end{align*}
For the third nonlinear term on the right side of \eqref{eq:pt:f:1} we have
\begin{align*}
 \pn(b \cdot \nabla \partial_1 b)  
   &= \pn( (a + f_1) \partial_1^2 f + f_2 \partial_2 \partial_1 f) \notag\\
   &= a \partial_1^2 f + \pn(f \cdot \nabla \p_1 f) \,.
\end{align*}
Gathering the above five displayed equations, we obtain that 
\begin{align*}
\mbox{Linear terms on right side of } \eqref{eq:pt:f:1}
&=  a^2 \p_1^2 f + a \p_1 \nabla p_L - \p_2 a \p_2 p_L e_1 + 2 a \p_1^2 f 
\,, \\
\mbox{Nonlinear terms on right side of } \eqref{eq:pt:f:1}
&= a \p_1 \pn(f\cdot \nabla f + \nabla p_N) + \pn(f\cdot \nabla w - w\cdot \nabla f)  \notag\\
&\quad  + 2 \pn(f \cdot \nabla \p_1 f) -   \p_2 \po(f_1 f_2) \p_1 f  \notag\\
&\quad    + \left( f_2 \p_2^2\po(f_1 f_2) - \p_2 a \pn(f\cdot\nabla f_2 + \p_2 p_N) \right) e_1\,.
\end{align*}
From the above displayed equations and \eqref{eq:pt:f:1}, the proof of \eqref{eq:full:equation} follows.\end{proof}

\subsection{Properties of the linear operator $L(f)$}
\label{sec:L:properties}

\begin{lemma}
\label{lem:L:properties}
Suppose $f(x_1 ,x_2)$ is sufficiently regular such that 
\begin{subequations}
\label{eq:mean:div:0}
\begin{align}
\div f &= 0 \\
\po f &=0
\end{align}
\end{subequations}
Then
\begin{subequations}
\begin{align*}
\div Lf &= 0 \\
\po (L(f)) &=0
\end{align*}
\end{subequations}
\end{lemma}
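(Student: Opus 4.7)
The plan is a direct term-by-term computation on the three pieces of $L(f)$ in \eqref{eq:linear:operator:vector:form}. The facts I will repeatedly invoke are: (i) $a = a(x_2)$, so $\partial_1 a = 0$; (ii) $\po \partial_1 \psi = 0$ for any $\TT^2$-periodic $\psi$; (iii) the defining PDE $-\Delta p_L = 2 \partial_2 a\, \partial_1 f_2$ from \eqref{def:lin:pressure}; and (iv) the zero-mean normalization $\fint_{\TT^2} p_L = 0$ encoded in the choice $p_L = 2(-\Delta)^{-1}(\partial_2 a\, \partial_1 f_2)$.

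For $\div L(f) = 0$, I will compute the divergence of each of the three terms in \eqref{eq:linear:operator:vector:form}:
\begin{align*}
\div\bigl((1+a)^2 \partial_1^2 f\bigr) &= (1+a)^2 \partial_1^2(\div f) + 2(1+a)\partial_2 a\, \partial_1^2 f_2 = 2(1+a)\partial_2 a\, \partial_1^2 f_2, \\
\div\bigl((1+a)\nabla \partial_1 p_L\bigr) &= (1+a)\partial_1 \Delta p_L + \partial_2 a\, \partial_1 \partial_2 p_L = -2(1+a)\partial_2 a\, \partial_1^2 f_2 + \partial_2 a\, \partial_1 \partial_2 p_L, \\
\div\bigl(\partial_2 a\, \partial_2 p_L\, e_1\bigr) &= \partial_2 a\, \partial_1 \partial_2 p_L,
\end{align*}
where I used $\div f = 0$ in the first line and the PDE for $p_L$ in the second. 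Combining these three contributions with the signs in \eqref{eq:linear:operator:vector:form} produces complete cancellation, so $\div L(f) = 0$.

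For $\po L(f) = 0$, the contribution of $(1+a)^2 \partial_1^2 f$ vanishes under $\po$ since $(1+a)^2$ is $x_1$-independent while $\po \partial_1^2 f = 0$. The contribution of $(1+a) \nabla \partial_1 p_L$ vanishes for the same structural reason: both components are a $\partial_1$-derivative of a periodic function multiplied by an $x_1$-independent coefficient. The remaining term produces $-\partial_2 a\, \partial_2(\po p_L)\, e_1$, and so the claim reduces to verifying $\po p_L \equiv 0$. To this end, I will apply $\po$ to $-\Delta p_L = 2\partial_2 a\, \partial_1 f_2$: the right-hand side vanishes (since $\po \partial_1 f_2 = 0$), while the left-hand side reduces to $-\partial_2^2(\po p_L)$. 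Hence $\po p_L$ is an affine function of $x_2$; periodicity in $x_2$ eliminates the linear part, and the normalization $\fint_{\TT^2} p_L = 0$ forces the remaining constant to vanish.

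There is no genuine obstacle here: the argument is purely structural, and the cancellations in $\div L(f)$ are built into the very definition of $p_L$ in \eqref{def:lin:pressure}. The only step requiring more than algebraic manipulation is the identification $\po p_L = 0$, which follows from the PDE together with periodicity and the zero-mean normalization, as explained above.
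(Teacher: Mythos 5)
Your proof is correct and follows essentially the same approach as the paper: the $\div L(f)$ computation is identical, and for $\po L(f)$ both arguments exploit the $\partial_1$-structure of $L$. The only cosmetic difference is that the paper factors $L(f) = \partial_1\bigl(\,\cdots\,\bigr)$ wholesale — using $p_L = 2(-\Delta)^{-1}\partial_1(\partial_2 a\, f_2)$ to pull a $\partial_1$ out of the third term — so $\po L(f) = 0$ is immediate, whereas you apply $\po$ term-by-term and therefore need the extra (correct) verification that $\po p_L = 0$.
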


\begin{proof}[Proof of Lemma~\ref{lem:L:properties}]
We can write $Lf $ as 
$$
Lf =   \partial_1 \left( (1 + a)^2  \partial_1 f +  ( 1 + a )\nabla  p_L   -2 \partial_2 a  \partial_2 (-\Delta )^{-1}(\partial_2 a f )e_1  \right)
$$
Therefore, assuming $f$ is sufficiently regular, we conclude for each  $t$ and $x_2$ we have $\po (L(f))(x_2,t) = 0$. 
Furthermore
\begin{align}
\div L f &= (1 + a)^2\partial_1^2 \div f  + (1 + a) \Delta \partial_1 p_L  + \partial_2 (1 + a)^2 \partial_1^2 f_2 - \partial_2 a \partial_1 \partial_2 p_L + \partial_2(1 + a) \partial_1 \partial_2 p_L \notag\\
           &= -2(1 + a) \partial_2 a \partial_1^2 f + 2 (1 + a)\partial_2 a \partial_1^2 f_2 \notag \\
           &= 0\,,
\end{align}
which concludes the proof.
\end{proof}
\begin{remark}[\bf Solvability of the linear equation]
Now let us consider the evolution equation
\begin{equation}
\label{eq:linear:evo}
\partial_t f = Lf \,, \qquad f|_{t=0} = f_0 \,,
\end{equation}
where the initial data $f_0$ satisfies \eqref{eq:mean:div:0}, i.e. it is divergence free and its zero frequency in the $x_1$ variable is trivial. Using Lemma \ref{lem:L:properties} and the energy estimates done in Proposition \ref{prop:linear:decay} we can show that for sufficiently regular initial data $f_0$, the unique solution $f$ of \eqref{eq:linear:evo} also satisfies \eqref{eq:mean:div:0}.
\end{remark}

Before we state our main semigroup estimate, Proposition~\ref{prop:linear:decay} below, we specify the function spaces where we consider the evolution of solutions of \eqref{eq:linear:evo}.
For $k \in \mathbb{N}$ we define
\begin{equation}
\label{eq:zero:mean:Sobolev}
\dot{H}_0^k:= \left\{ f \in H^k(\mathbb{T}^2 ; \mathbb{R}^2) \colon \po f_j  = 0,  j \in \{ 1,2 \}  \right\} .
\end{equation}
Note  that $\dot{H}_0^k$ embeds into $\dot{H}_0^l$ for any $l \leq k$ because Poincare's inequality holds. 
\begin{prop}[\bf Linear decay estimates]
\label{prop:linear:decay}
Let $f$ be a solution of \eqref{eq:linear:evo}.
For any $\delta \in (0,1)$ there exists $\varepsilon_0 > 0$ such that for any $0 < \varepsilon \leq  \varepsilon_0$
if
\begin{equation}
    \|a(\cdot, t) \|_{W^{k + 1, \infty}} \leq 4 \varepsilon  \quad t \in [0 ,T]
    \label{eq:Lagavulin}
 \end{equation}
then 
\begin{equation}
    \label{eq:semigroup}
    \| e^{L t }\|_{\dot{H}_0^k \to \dot{H}_0^k} \leq e^{-(1 - \delta) t} \quad t \in [0, T]
\end{equation}
where $k$ is as in Theorem \ref{theorem:main}. 
\end{prop}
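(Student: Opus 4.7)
The plan is a direct energy estimate at the $\dot H^k$ level, treating the full operator $L$ as an $O(\varepsilon)$ perturbation of the pure dissipation $\partial_1^2 f$ and exploiting Poincar\'e in $x_1$, which is available because $\po f = 0$. By Lemma~\ref{lem:L:properties} the space $\dot H^k_0$ is invariant under \eqref{eq:linear:evo}, so solutions remain divergence free and with vanishing $\po$ for all $t \in [0,T]$.

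Pairing $\Lambda^k f$ against $\Lambda^k L(f)$ and integrating over $\TT^2$ gives
\begin{align*}
\frac{1}{2}\frac{d}{dt}\|f\|_{\dot H^k}^2
&= \int_{\TT^2} \Lambda^k f \cdot \Lambda^k\bigl((1+a)^2 \partial_1^2 f\bigr) \\
&\quad + \int_{\TT^2} \Lambda^k f \cdot \Lambda^k\bigl((1+a)\nabla\partial_1 p_L - \partial_2 a\,\partial_2 p_L\, e_1\bigr) \\
&=: I_1 + I_2.
\end{align*}
For $I_1$, I would commute $\Lambda^k$ past $(1+a)^2$, integrate by parts in $x_1$, and use \eqref{eq:Lagavulin} together with the Kato--Ponce type estimate \eqref{eq:Kato:Ponce} to obtain
\[
I_1 \leq -(1 - C\varepsilon)\|\partial_1 \Lambda^k f\|_{L^2}^2,
\]
with $C$ depending only on $k$. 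For $I_2$, integrating the first term by parts and using $\div f = 0$ together with $\nabla(1+a) = \partial_2 a\, e_2$ reduces its principal contribution to $-\int_{\TT^2} \partial_2 a\,\Lambda^k f_2\,\partial_1 \Lambda^k p_L$, which carries an explicit factor of $\partial_2 a$. Combining this with the elliptic estimate $\|\Lambda^{k+1} p_L\|_{L^2} \lesssim \varepsilon \|\partial_1 \Lambda^k f\|_{L^2}$, which follows from $p_L = 2(-\Delta)^{-1}(\partial_2 a\,\partial_1 f_2)$ and standard product estimates, and with an analogous treatment of the $-\partial_2 a\,\partial_2 p_L\, e_1$ piece, one gets $|I_2| \leq C\varepsilon \|\partial_1 \Lambda^k f\|_{L^2}^2$.

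Since $\po \Lambda^k f = \Lambda^k \po f = 0$, the Poincar\'e inequality in $x_1$ yields $\|\Lambda^k f\|_{L^2} \leq \|\partial_1 \Lambda^k f\|_{L^2}$, and assembling the bounds for $I_1$ and $I_2$ produces
\[
\frac{d}{dt}\|f\|_{\dot H^k}^2 \leq -2(1 - C\varepsilon)\|f\|_{\dot H^k}^2.
\]
Choosing $\varepsilon_0$ so small that $C\varepsilon_0 \leq \delta$ and applying Gronwall delivers \eqref{eq:semigroup}.

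The main obstacle is to verify that \emph{every} term beyond the leading dissipation genuinely carries a factor of $\varepsilon$ rather than merely being bounded. The most delicate pieces are the commutators $[\Lambda^k,(1+a)^2]\partial_1^2 f$, since they involve two derivatives of $f$; the strategy is to distribute derivatives via Kato--Ponce so that one $\partial_1$ is absorbed into $\partial_1 \Lambda^k f$ while the second falls on $a$, costing at most $\|a\|_{W^{k+1,\infty}}$. This explains the regularity requirement on $a$ built into \eqref{eq:Lagavulin}, and secures the full decay rate $1-\delta$ in \eqref{eq:semigroup} after $\varepsilon_0$ is chosen.
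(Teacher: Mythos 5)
Your approach is correct and yields the same decay rate, but your technical strategy differs from the paper's in an instructive way. The paper does not use $\Lambda^k$ at all: it performs two separate energy estimates, one after applying $\partial_1^k$ and one after applying $\partial_2^k$. The $\partial_1^k$ estimate is completely commutator-free, because $a$ depends only on $x_2$ and therefore $\partial_1^k$ passes exactly through all coefficients; this gives the clean inequality $\tfrac12\tfrac{d}{dt}\|\partial_1^k f\|_{L^2}^2 + (1-\tfrac{\delta}{2})\|\partial_1^{k+1}f\|_{L^2}^2\le 0$ with no loss. The $\partial_2^k$ estimate produces commutator terms via the exact Leibniz rule, and each such term automatically carries at least one $\partial_2$ derivative on $a$ (since only the $d>0$ terms survive), giving the $\varepsilon$ factor directly without invoking fractional commutator estimates or Sobolev embeddings of $f$ into $L^\infty$. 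Your route — pairing $\Lambda^k f$ against $\Lambda^k L(f)$ and controlling $[\Lambda^k,(1+a)^2]\partial_1^2 f$ via Kato--Ponce and one integration by parts in $x_1$ — also closes, but is somewhat heavier: after writing $(1+a)^2 = 1 + g$ you need $\|\Lambda^k g\|_{L^2}$ and $\|\partial_1 f\|_{L^\infty}$ (or a similar pair), which requires the 2D Sobolev embedding and hence uses the assumption $k\ge 4$ more directly, whereas the paper's split avoids this. One small misstatement in your sketch: you say "the second $\partial_1$ falls on $a$," but $\partial_1 a = 0$ since $a=a(x_2)$; the second $\partial_1$ is instead absorbed by an integration by parts onto $\Lambda^k f$ (which is legitimate precisely because $\partial_1$ commutes with multiplication by $g$), and the $\varepsilon$ smallness comes from $\partial_2$-derivatives landing on $a$ inside the Kato--Ponce bound. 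With that correction your argument is sound, and it correctly identifies the two key structural facts — the Poincar\'e inequality in $x_1$ on $\dot H^k_0$ and the fact that every non-leading term carries a factor of $\varepsilon$ — which are exactly what drive the paper's proof as well.
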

\begin{proof}[Proof of Proposition~\ref{prop:linear:decay}]
Differentiating  \eqref{eq:linear:operator:vector:form}  $k$ times with respect to $\partial_1$, multiplying by $\partial_1^k f$, and then integrating gives
\begin{align}
      \frac{1}{2} \frac{d}{dt} \| \partial_1^k f\|_{L^2}^2 + \| (1 + a) \partial_1^{k + 1} f \|_{L^2}^2 = \langle ( 1 + a) \nabla \partial_1^{k + 1} p_L, \partial_1^k f \rangle - \langle \partial_2 a \partial_2\partial_1^k p_L,  \partial_1^k f_1 \rangle
  \end{align}
where we have used that $a$  does not depend on $x_1$. Using the definition of $p_L$ we have
   \begin{align}
    \langle ( 1 + a)\nabla \partial_1^{k+ 1 } p_L, \partial_1^kf \rangle 
    &\leq \| 1 + a \|_{L^{\infty}} \| \nabla \partial_1^{k + 1} p_L \|_{L^2} \| \partial_1^k f\|_{L^2} \leq 2\| 1 + a \|_{L^{\infty}} \| \partial_2 a\|_{L^{\infty}} \| \partial_1^{k +1} f \|_{L^2}^2 \notag \\
    \langle \partial_2 a \partial_2 \partial_1^kp_L, \partial_1^k f_1 \rangle 
    &\leq \| \partial_2 a \|_{L^{\infty}} \| \partial_2 \partial_1^k p_L \|_{L^2} \| \partial_k f\|_{L^2} \leq 2\| \partial_2 a \|_{L^{\infty}}^2 \| \partial_1^{k + 1} f \|_{L^2}^2
 \end{align}
where we have used that Poincare's inequality in the $x_1$ variable holds with constant $1$. For the given $\delta$, we can take $\varepsilon_0$ small enough such that if  $\| a \|_{W^{ k + 1,\infty}} = \varepsilon \leq \varepsilon_0$   then
\begin{align}
\label{eq:p1:estimate}
      \frac{1}{2} \frac{d}{dt}\| \partial_1^k f\|_{L^2}^2  + \left(1 - \frac{\delta}{2} \right)  \| \partial_1^{k+ 1} f \|_{L^2}^2\leq 0.
\end{align}
Repeating the same process with $\partial_2^k$ gives
\begin{align*}
  & \frac{1}{2} \frac{d}{dt} \| \partial_2^k f \|_{L^2}^2 + \|(  1 + a) \partial_2^k \partial_1 f\|_{L^2}^2 \notag\\
   &= \sum_{0 < d \leq k} c_{d, k} \langle \partial_2^{d} (1 + a)^2 \partial_2^{k - d} \partial_1^2 f, \partial_2^k f \rangle + \sum_{ 0 \leq d \leq k}c_{d, k} \langle  \partial_2^{d} (1 + a) \partial_2^{k - d}   \nabla \partial_1 p_L, \partial_2^k f \rangle  \notag \\&-\sum_{0 \leq d \leq k} c_{d, k} \langle \partial_2^{d} \partial_2 a \partial_2^{k - d} \partial_2 p_L, \partial_2^k f_1 \rangle = \sum_{0 < d \leq k}T_{1, d} + \sum_{0 \leq d \leq k} (T_{2,d } - T_{3,d})
 \end{align*}
 We now bound $T_{i,d}$:
\begin{align}
\label{eq:T1}
  T_{1,d} &:=  c_{d, k} \langle  \partial_2^{d} (1 + a)^2 \partial_2^{k - d} \partial_1^2 f, \partial_2^k f \rangle \notag\\
   &= -c_{d, k}\langle  \partial_2^{d} (1 + a)^2 \partial_2^{k - d} \partial_1 f, \partial_2^k \partial_1 f \rangle \notag \\
   &\leq c_{d, k} \|  \partial_2^{d} (1 + a)^2  \|_{L^{\infty}} \| \partial_2^{k - d} \partial_1 f \|_{L^2} \| \partial_2^k \partial_1 f\|_{L^2} \notag\\
   &\leq  c_{d, k} c_{j , d} \| \partial_2^j( 1 + a)\|_{L^{\infty}}\|  \partial_2^{d - j}(1 + a)\|_{L^{\infty}} \| \partial_2^{k - d} \partial_1 f\|_{L^2} \| \partial_2^k \partial_1 f\|_{L^2}
\end{align}
where $0 < d \leq k$ and $0 \leq j \leq d$. Since $d$ is never 0, this ensures either $ \partial_2^j( 1 + a) = \partial_2^j a$ or $\partial_2^{d - j}(1 + a) = \partial_2^{d - j}a $ which are smaller than $\varepsilon$ in $L^{\infty}$. Therefore, by choosing  $\varepsilon_0$ sufficiently small, we can take these terms as small as we want. Similarly, 
\begin{align}
\label{eq:T2}
   T_{2, d} &:=  c_{d,k}\langle \partial_2^{d} (1 + a) \partial_2^{k - d}  \nabla \partial_1 p_L , \partial_2^k f \rangle \notag\\
    &\leq   2 c_{d, k} \| \partial_2^{d} (1 + a)\|_{L^{\infty}} \| \partial_2^{k - d} ( \partial_2 a \partial_1 f_2)\|_{L^2} \| \partial_2^k f \|_{L^2} \notag \\
    &\leq  2 c_{d, k} c_{j, k - d} \| \partial_2^{d} ( 1  +a) \|_{L^{\infty}} \| \partial_2^{j + 1 } a \|_{L^{\infty}} \| \partial_2^{k - d - j} \partial_1 f_2 \|_{L^2} \| \partial_2^k f \|_{L^2} \notag\\
    &\leq  2c_{d, k} c_{j, k - d} \| \partial_2^{d} (1 + a) \|_{L^{\infty}} \| \partial_2^{j +1 } a \|_{L^{\infty}} \|\partial_1 f\|_{\dot{H}_0^k}^2 
\end{align}
and
\begin{align}
\label{eq:T3}
   T_{3,d} &:=  c_{d, k }\langle \partial_2^{d + 1} a \partial_2^{k - d + 1} p_L, \partial_2^k f_1 \rangle \notag\\
    &\leq 2c_{d, k} \| \partial_2^{d + 1} a \|_{L^{\infty}} \| \partial_2^{k - d}(\partial_2 a f_2)\|_{L^2} \| \partial_2^k f_1 \|_{L^2} \notag\\
    &\leq 2c_{d, k} c_{j, k - d} \| \partial_2^{d + 1} a\|_{L^{\infty}} \| \partial_2^{j +1 } a\|_{L^{\infty}} \| \partial_2^{k -d -j} f_2\|_{L^2} \| \partial_2^k f \|_{L^2} \notag \\
    &\leq 2c_{d, k} c_{j, k - d} \| \partial_2^{d + 1} a\|_{L^{\infty}} \| \partial_2^{j+ 1 } a\|_{L^{\infty}}  \| \partial_1 f \|_{\dot{H}_0^k}^2.
\end{align}
Combining the estimates for \eqref{eq:T1}, \eqref{eq:T2}, and \eqref{eq:T3} gives
\begin{align}
\label{eq:p2:estimate}
    \frac{1}{2} \frac{d}{dt}\| \partial_2^k f \|_{L^2}^2 + \| (1 + a) \partial_2^k \partial_1 f\|_{L^2}^2 \leq  C \varepsilon \| \partial_1 f\|_{\dot{H}_0^k}^2.
\end{align}
Combining \eqref{eq:p2:estimate} with \eqref{eq:p1:estimate} and taking $\varepsilon_0$ sufficiently small we conclude
\begin{align}
    \frac{1}{2} \frac{d}{dt}
    \|  f \|_{\dot{H}_0^k}^2 \leq  -(1 -\delta)\| \partial_1 f \|_{\dot{H}_0^k} \leq -(1 -\delta) \| f \|_{\dot{H}_0^k}^2
\end{align}
which completes the proof.
\end{proof}

\subsection{Proof of Theorem~\ref{theorem:main}}
The proof is based on the local existence result in Theorem~\ref{thm:local}, and a standard bootstrap argument for the bounds \eqref{eq:thm:stability}. Since $\po b_0 = 0$ we have that $\int_{\TT^2} b_0 dx_1 dx_2 = 0$, and by appealing to \eqref{eq:magnetic:perturbation2a} we see that  $\int_{\TT^2} b (\cdot,t) dx_1 dx_2 = 0$ for all $t\geq 0$. It follows that $\norm{B}_{L^2}^2 = \norm{b}_{L^2}^2 + |\TT|^2 + 2 \int_{\TT^2} b_1 dx_1 dx_2 = \norm{b}_{L^2}^2 + |\TT|^2$. Therefore, \eqref{eq:energy:a} and \eqref{eq:b0:Hm} imply that \begin{align}
\norm{b(\cdot,t)}_{L^2} \leq \norm{b_0}_{L^2} \leq \eps
\label{eq:little:b:L2}
\end{align} 
for all $t\geq 0$. Moreover, \eqref{eq:b0:Hm} and \eqref{eq:Hs:energy:3} show that there exists $T_0>0$ such that for all $t \in [0,T_0]$ we have that $\norm{b(\cdot,t)}_{\dot{H}^m} \leq 2 \eps$. This bound may be combined with \eqref{eq:little:b:L2} to conclude that the bounds \eqref{eq:thm:stability} hold on $[0,T_0]$, with all inequalities being {\em strict inequalities}. Due to the local existence result in Theorem~\ref{thm:local} via a standard continuity argument we may thus define a maximal time $T_* \in [T_0,\infty]$ such that the estimates \eqref{eq:thm:stability} hold on $[0,T_*)$. Our goal is to show that $T_* = \infty$. In order to achieve this we show that if \eqref{eq:thm:stability} hold on $[0,T]$ for some $T>0$, then we may a posteriori show that these bounds in fact hold with constants $3\eps$ instead of $4\eps$ in \eqref{eq:f:Hk}--\eqref{eq:b:Hm}; this then shows $T_* = \infty$. The rest of the proof is dedicated to establishing this implication, and so we fix a time interval $[0,T]$, and we assume throughout that \eqref{eq:thm:stability} hold. We recall and use the notation in \eqref{eq:a:f:w:def}.

\subsubsection{Estimates for the nonlinear terms $N$ and $N'$}
Under the standing assumptions, we estimate the nonlinear terms $N(f,w)$ defined in \eqref{def:nonlinearity} and $N'(f,w)$ defined in \eqref{eq:b:zero:mode} and claim that 

\begin{equation}
\label{eq:nonlinear:bound:bbar}
    \| N(f,w)(\cdot,t) \|_{H^k} \leq   \varepsilon^{2}   e^{-\frac{3}{2}(1 - \delta) t}
\end{equation}
and 
\begin{equation}
\label{eq:nonlinear:bound:btilde}
    \|  N'(f,w)(\cdot,t) \|_{H^{k + 2}} \leq \varepsilon^{2} e^{-(1 - \delta) t} . 
\end{equation}

Prior to establishing \eqref{eq:nonlinear:bound:bbar} and \eqref{eq:nonlinear:bound:btilde}, we claim that the pressure terms in \eqref{def:lin:pressure} and \eqref{def:nonlin:pressure} satisfy the bounds  
\begin{subequations}
\begin{align}
\label{eq:lin:pressure:bound}
    \| p_L \|_{H^{k + \beta}} 
    & \lesssim \|a  \|_{H^{k + \beta}} \|f \|_{H^{k}}^{\frac{m - k - \beta + 1 }{m - k}} \|f \|_{H^{m}}^{\frac{\beta - 1}{m - k}} \\
\label{eq:nonlin:pressure:bound}
     \| p_N \|_{H^{k + \beta}} 
     &\lesssim \| f \|_{H^{k }}^{\frac{2( m -k - \beta + 1)}{m-k}} \| f \|_{H^{m}}^{\frac{2(\beta - 1)}{m- k}} 
\end{align}
\end{subequations}
for $1 \leq \beta\leq 10$. The estimate for the linear pressure follows directly from \eqref{def:lin:pressure},  the fact that $H^k$ is an algebra, and interpolation:
\begin{align*}
    \| p_L \|_{H^{k + \beta}} \lesssim \| \partial_2 a  f_2 \|_{H^{k + \beta - 1}} \lesssim \|a \|_{H^{k + \beta}} \| f_2 \|_{H^{k + \beta- 1}} 
    \lesssim \|a  \|_{H^{k + \beta}} \| f\|_{H^{k}}^{\frac{m - k - \beta + 1 }{m - k}} \|f \|_{H^{m}}^{\frac{\beta - 1}{m - k}} \, .
\end{align*}
Similarly, from \eqref{def:nonlin:pressure} we have
\begin{align*}
    \| p_N \|_{H^{k + \beta}} 
    &\lesssim \| (\partial_1 f_1)^2 + \partial_1 f_2 \partial_2 f_1  \|_{H^{k + \beta -2 }}
    \lesssim \|f \|_{H^{k + \beta-1}}^2     
    \lesssim \| f \|_{H^{k }}^{\frac{2( m -k - \beta + 1)}{m-k}} \| f \|_{H^{m}}^{\frac{2(\beta - 1)}{m- k}}.
\end{align*}

Next, we claim that the velocity field $w$ from \eqref{def:u:bar} satisfies the estimate 
\begin{align}
\label{eq:velocity:bound}
    \|w \|_{H^{k  + \beta}} 
    & \lesssim \| a \|_{H^{k + \beta + 1}} \| f \|_{H^k}^{\frac{ m-k - \beta -1}{ m-k}}  \| f \|_{H^m}^{\frac{\beta + 1}{m - k}}   + \| f \|_{H^k}^{\frac{2(m - k - \beta -1)}{m - k}}  \| f \|_{H^m}^{\frac{2(\beta + 1)}{m - k}} 
\end{align}
for $\beta \geq 0$. This bound follows from \eqref{def:u:bar}, the previously established bounds \eqref{eq:nonlin:pressure:bound}--\eqref{eq:nonlin:pressure:bound}, and an algebra + interpolation argument:
\begin{align*}
    \| w\|_{H^{k + \beta}} 
    &\lesssim \|a \partial_1 f \|_{H^{k + \beta}} + \|\partial_2a f_2 \|_{H^{k + \beta}} + \| f \cdot \nabla f \|_{H^{k + \beta}} + \| \nabla p_L \|_{H^{k + \beta}} + \| \nabla p_N \|_{H^{k + \beta}} \notag\\
   &\lesssim \| a \|_{H^{k + \beta + 1 }} \| f \|_{H^{k  +\beta + 1}} + \| f \|_{H^{k + \beta + 1}}^2 + \| p_L\|_{H^{k + \beta + 1}} + \| p_L\|_{H^{k + \beta + 1}} \notag \\
   &\lesssim \| a \|_{H^{k + \beta + 1}} \| f \|_{H^k}^{\frac{ m - k - \beta - 1}{ m - k}} \| f \|_{H^m}^{\frac{ \beta + 1 }{m - k}} 
   +  \| f \|_{H^k}^{\frac{2( m - k - \beta - 1)}{ m - k}} \| f \|_{H^m}^{\frac{2 (\beta + 1) }{m - k}} \notag \\
   &\qquad + \| a \|_{H^{k + \beta + 1}} \| f \|_{H^k}^{\frac{ m - k - \beta }{m - k}}\| f \|_{H^m}^{\frac{\beta}{m- k}} + \| f \|_{H^k}^{\frac{ 2(m - k - \beta) }{m - k}}\| f \|_{H^m}^{\frac{2\beta}{m- k}}\,.
   \end{align*}
The bound \eqref{eq:velocity:bound} follows by  using the Poincar\'e inequality (recall that $f$ has zero mean on $\TT^2$).

With \eqref{eq:nonlin:pressure:bound}--\eqref{eq:nonlin:pressure:bound} and \eqref{eq:velocity:bound} available, we next give the proof of \eqref{eq:nonlinear:bound:bbar}. The right side of \eqref{def:nonlinearity} contains ten terms, and as such we estimate
 \begin{align*}
    \| N(f,w)\|_{H^k} 
    \leq  N_1 + \ldots + N_{10} \,,
  \end{align*}
where   
 \begin{align*}
      N_1&:=   \| a \partial_1 \pn (f \cdot \nabla f ) \|_{H^k} \lesssim \| a \|_{H^k}  \| f \|_{H^{k + 2}}^2 \lesssim \| a \|_{H^k} \| f \|_{H^k}^{2- \frac{4}{m-k}}\| f \|_{H^m}^{\frac{4}{m - k}} \\
     N_2 &:=  \| a \partial_1 \pn \nabla p_N \|_{H^k} \lesssim \| a \|_{H^k} \| p_N \|_{H^{k + 2}} \lesssim  \| a \|_{H^k} \| f \|_{H^{k }}^{2 - \frac{2}{m-k}} \| f \|_{H^{m}}^{\frac{2}{m- k}}\\
  N_3&:=  \| \pn(f \cdot \nabla w )\|_{H^k} \lesssim \| f \|_{H^k} \| w\|_{H^{k  +1 }} 
  \les \| a \|_{H^{k + 2}} \| f \|_{H^k}^{2 - \frac{2}{ m-k}}  \| f \|_{H^m}^{\frac{2}{m - k}}   + \| f \|_{H^k}^{3 - \frac{4}{m - k}}  \| f \|_{H^m}^{\frac{4}{m - k}} \\
  N_4&:= \|\pn(w \cdot \nabla f) \|_{H^k}  
      \lesssim \| w\|_{H^k} \| f \|_{H^{k + 1}} 
      \les \| a \|_{H^{k +  1}} \| f \|_{H^k}^{2 - \frac{2}{ m-k}}  \| f \|_{H^m}^{\frac{2}{m - k}}   + \| f \|_{H^k}^{3 - \frac{3}{m - k}}  \| f \|_{H^m}^{\frac{3}{m - k}}   \notag \\
   N_5&:= \|\partial_2 \po(f_1 f_2)\partial_1 f \|_{H^k} \lesssim \|f\|_{H^{k +1}}^3 \lesssim \| f \|_{H^k}^{3 -  \frac{3}{m-k} }\| f\|_{H^m}^{\frac{3}{m-k}}\\
   N_6 &:= 2\|\pn(f \cdot \nabla \partial_1 f)\|_{H^k} \lesssim \| f \|_{H^{k + 2}}^2 \lesssim \| f \|_{H^k}^{2 - \frac{4}{ m - k}} \| f \|_{H^m}^{\frac{4}{m - k}}\\
    N_7 &:= \|\nabla \partial_1 \pn p_N\|_{H^k} \lesssim \| p_N \|_{H^{k + 2}} \lesssim \| f \|_{H^{k }}^{2 - \frac{2}{m-k}} \| f \|_{H^{m}}^{\frac{2}{m- k}}\\
    N_8 &:= \|\partial_2^2 \po(f_1 f_2)f_2\|_{H^k} \lesssim \| f \|_{H^{k + 2}}^3 \lesssim \| f \|_{H^k}^{3 - \frac{6}{m -k}} \| f \|_{H^m}^{\frac{6}{m-k}}\\
    N_9 &:= \|\partial_2 a \pn(f \cdot \nabla f_2)\|_{H^k} \lesssim \| a \|_{H^{k + 1}} \| f \|_{H^{k + 1}}^2 \lesssim \| a \|_{H^{k + 1}} \| f \|_{H^k}^{2 - \frac{2}{ m -k }} \| f \|_{H^m}^{\frac{2}{m -k }}\\
    N_{10} &:= \|\partial_2 a \partial_2 \pn p_N\|_{H^k} \lesssim \| a \|_{H^{k + 1}} \| p_N\|_{H^{k + 1}} \lesssim \| a \|_{H^{k + 1}}\| f \|_{H^{k }}^2\,,
\end{align*}
where the implicit constants only depend on $m$ and $k$. 
At this point we use the assumption that $ m- k \geq 9$, the standing assumption \eqref{eq:thm:stability:alt}, and 
take $\varepsilon$ sufficiently small depending on $\delta$ to obtain that \eqref{eq:nonlinear:bound:bbar} holds.

In a similar fashion, we estimate the nonlinear term in \eqref{eq:b:zero:mode} as
\begin{align*}
    \| N'(f,w) \|_{H^{k + 2}} \leq N_1' + N_2' + N_3'
\end{align*}
with 
\begin{align*}
 N_1' &:= 2\| \partial_2\po  (f_2 \partial_1 f_1) \|_{H^{k+2}} 
 \lesssim \| f \|_{H^{k + 4}}^2  
 \lesssim  \| f \|_{H^{k}}^{2-\frac{8}{ m - k}} \| f \|_{H^m}^{\frac{8}{m - k}}\\
 N_2' &:= \| \partial_2\po ( f_2 w_1) \|_{H^{k + 2}} \lesssim \| f \|_{H^{k + 3}}\| w \|_{H^{k + 3}} 
 \lesssim \| a \|_{H^{k + 4}} \| f \|_{H^k}^{2 - \frac{7}{ m-k}}  \| f \|_{H^m}^{\frac{7}{m - k}}   + \| f \|_{H^k}^{3 -\frac{11}{m - k}}  \| f \|_{H^m}^{\frac{11}{m - k}}   \\
     N_3'  &:= \| \partial_2\po ( w_2 f_1) \|_{H^{k + 2}}  
     \les \norm{w}_{H^{k+3}} \norm{f}_{H^{k+3}}
      \lesssim \| a \|_{H^{k + 4}} \| f \|_{H^k}^{2 - \frac{7}{ m-k}}  \| f \|_{H^m}^{\frac{7}{m - k}}   + \| f \|_{H^k}^{3 -\frac{11}{m - k}}  \| f \|_{H^m}^{\frac{11}{m - k}}  \,,
\end{align*}
where the implicit constant depends only on $m$ and $k$. Once again, using  $ m- k \geq 9$,  the standing assumption \eqref{eq:thm:stability:alt}, and  the bound $\|a(t) \|_{H^{k  +4}} \lesssim \|a(t) \|_{H^{m}} \lesssim e^{ \varepsilon t}$, after taking $\varepsilon$ sufficiently small depending on $\delta$ we have that  \eqref{eq:nonlinear:bound:btilde} holds.

\subsubsection{Closing the \eqref{eq:f:Hk} bootstrap}
This argument is based on the Duhamel formula, which allows us via \eqref{eq:full:equation} to write
\begin{equation}
    \label{eq:Duhamel}
    f(t) = e^{Lt}f_0 + \int_0^t e^{L(t -s)} N(f,w)(s) ds \,.
\end{equation}
Next, we note that $a$ is a function defined on $\TT$, and thus by the Sobolev embedding we have $H^{k +2} \subset W^{k + 1, \infty}$, which shows that \eqref{eq:thm:stability:alt} implies \eqref{eq:Lagavulin}; thus,  we may apply Proposition~\ref{prop:linear:decay}. Applying the  $\dot{H}^k$  norm to \eqref{eq:Duhamel}, and appealing to \eqref{eq:nonlinear:bound:bbar}, we arrive at
\begin{align}
    \| f(t)\|_{\dot{H}^k} 
    &\leq \varepsilon e^{-(1 - \delta)t} + \int_0^t e^{ -(1 - \delta) (t -s)}\| N(f,w)(s) \|_{\dot{H}^k} ds\notag\\
    &\leq \varepsilon e^{-(1 - \delta)t} + \eps^{2} \int_0^t e^{ -(1 - \delta) (t -s)} e^{-\frac 32 (1-\delta) s} ds \notag\\
    &\leq \varepsilon e^{-(1 - \delta)t} \left( 1 + \eps  \int_0^t  e^{-\frac 12 (1-\delta) s} ds \right)\notag\\
&\leq    2 \varepsilon e^{- (1 - \delta) t}\,,
\end{align}
once $\eps$ is chosen sufficiently small with respect to $\delta$. This bound improves on \eqref{eq:f:Hk}, as desired.

\subsubsection{Closing the \eqref{eq:a:H:k+2} bootstrap}
By assumption, we have that $a|_{t=0}=0$. Integrating the evolution equation for $a$ in \eqref{eq:b:zero:mode}, and appealing to \eqref{eq:nonlinear:bound:btilde}, we thus deduce that 
\begin{align*}
\norm{a(\cdot,t)}_{H^{k+2}} \leq \int_0^t \norm{ N'(f,w)(\cdot,s)}_{H^{k+2}}  ds \leq \eps^{2} \int_0^t e^{-(1-\delta)s} ds \leq \eps
\end{align*}
for all $t\in [0,T]$, upon choosing $\eps$ to be sufficiently small in terms of $\delta$. This improves \eqref{eq:a:H:k+2} by a constant factor, as desired.

\subsubsection{Closing the \eqref{eq:b:Hm} bootstrap}
From \eqref{eq:blowup:criterion}, \eqref{eq:velocity}, \eqref{eq:b0:Hm}, and the fact that $\norm{B}_{\dot{H}^s} = \norm{b}_{\dot{H}^s}$, we deduce that for a constant $C_m$ which only depends on $m$, we have
\begin{align}
\norm{b(\cdot,t)}_{\dot{H}^m}^2 
&\leq \eps^2  \exp\left( C_{m} \int_0^t \norm{\nabla v(\cdot,s)}_{L^\infty} + \norm{\nabla \p_1 b(\cdot,s)}_{L^\infty} + \norm{\nabla b(\cdot,s)}_{L^\infty}^2 ds \right) \,.
\label{eq:b:Hm:temp:1}
\end{align}
From \eqref{eq:thm:stability:alt}, \eqref{eq:velocity:decomposition}, \eqref{eq:velocity:bound}, and the fact that $k > d/2+1= 2$, we note that 
\begin{align}
\int_0^t \norm{\nabla v(\cdot,s)}_{L^\infty} ds 
&\les \int_0^t \norm{w(\cdot,s)}_{H^k} + \norm{f(\cdot,s)}_{H^{k+1}}^2 ds \notag\\
&\les \int_0^t \norm{a(\cdot,s)}_{H^{k+1}} \norm{f}_{H^k}^{1 - \frac{1}{m-k}} \norm{f}_{H^m}^{\frac{1}{m-k}} + \norm{f}_{H^k}^{2 - \frac{2}{m-k}} \norm{f}_{H^m}^{\frac{2}{m-k}}  ds \notag\\
&\les \eps^{2} \int_0^t e^{- \frac 89 (1-\delta) s + \frac 29 \eps s } ds \les \eps 
\,,
\label{eq:b:Hm:temp:2}
\end{align}
if $\eps$ is sufficiently small with respect to $\delta$. Returning to \eqref{eq:b:Hm:temp:1}, we see that $\nabla \p_1 b = \nabla \p_1 f$, and since $k > 2 + d/2 = 3$ and $f$ has zero mean on $\TT^2$, we have that \eqref{eq:thm:stability:alt} implies
\begin{align}
\int_0^t   \norm{\nabla \p_1 b(\cdot,s)}_{L^\infty}  ds
\les \int_0^t   \norm{f(\cdot,s)}_{H^k}  ds
\les \eps \int_0^t  e^{-(1-\delta) s} ds 
\les \eps^{\frac 12}\,.
\label{eq:b:Hm:temp:3}
\end{align}
Lastly, for the third term in \eqref{eq:b:Hm:temp:1} we similarly note that 
\begin{align}
\int_0^t   \norm{\nabla  b(\cdot,s)}_{L^\infty}^2  ds
\les \int_0^t  \norm{a(\cdot,s)}_{H^k}^2 +  \norm{f(\cdot,s)}_{H^k}^2  ds
\les \eps^2 t + \eps \,,
\label{eq:b:Hm:temp:4}
\end{align}
where the implicit constant only depends on $m$ and $k$. By combining \eqref{eq:b:Hm:temp:1}--\eqref{eq:b:Hm:temp:4} we thus obtain that there exists a constant $C_{m,k} >0$, which only depends on $m$ and $k$, such that 
\begin{align*}
\norm{b(\cdot,t)}_{\dot{H}^m}^2 
\leq \eps^2  \exp\left( C_{m,k} (\eps^2 t + \eps^{\frac 12}) \right)
\leq \eps \exp( \eps t)
\end{align*}
upon taking $\eps$ to be sufficiently small, solely in terms of $m$ and $k$. This bound improves on \eqref{eq:b:Hm}, as desired.

\subsubsection{Relaxation in the infinite time limit}
We recall that the total velocity field has zero mean on $\TT^2$ and is given from \eqref{eq:velocity}--\eqref{def:perturbation:velocity} as $u = \p_1 b + v$, and $v$ is computed from $f$ and $w$ via \eqref{eq:velocity:decomposition}. Since $\p_1 b = \p_1 f$, the fact that $\norm{u(\cdot,t)}_{H^k} \to 0$ as $t\to \infty$, exponentially fast, now follows from \eqref{eq:thm:stability:alt} and \eqref{eq:velocity:bound}:
\begin{align*}
 \norm{u(\cdot,t)}_{H^{k-1}} 
 &=  \norm{u(\cdot,t)}_{\dot{H}^{k-1}} 
 \les \norm{f}_{\dot{H}^k} + \norm{w}_{\dot{H}^{k}} + \norm{f_1 f_2}_{\dot{H}^k} \notag\\
 &\les \eps e^{-(1-\delta)t} + \eps^3 e^{-  \frac{(1-\delta)(m-k-1) -\eps}{m-k} t } + \eps^2 e^{- \frac{2(1-\delta) (m-k-1) - 2\eps}{m-k} t } + \eps^2 e^{-2(1-\delta)t} \notag\\
 &\les \eps e^{- \frac{1-\delta}{2}t} \,.
\end{align*}
In order to conclude the proof of the theorem, we note that in view of \eqref{eq:MHD:a} and the fact that $k\geq 4$, we have 
\begin{align*}
 \norm{\p_t b}_{H^{k-2}} = \norm{\p_t B}_{H^{k-2}}
 \les \norm{B \otimes u}_{H^{k-1}}
 \les \norm{B}_{H^{k-1}} \norm{u}_{H^{k-1}}
 \les (1 + \norm{b}_{H^k}) \norm{u}_{\dot{H}^{k-1}}
\end{align*}
and thus in view of \eqref{eq:f:Hk}--\eqref{eq:a:H:k+2} we obtain
 \begin{align*}
 \norm{\p_t b}_{H^{k-2}}
 \les   \eps e^{- \frac{1-\delta}{2}t} \,.
 \end{align*}
The strong convergence $\lim_{t \to \infty} b(\cdot,t) = \bar b$, with respect to the $H^{k-2}$ norm, for an incompressible vector field $\bar b$ which has norm $\leq 4 \eps$, now follows from the above estimate and the fundamental theorem of calculus in time. The corresponding limiting relaxation state for the total magnetic field is then $\bar B = e_1+ \bar b$.

\section{Nonlinear instabilities in 3D}
\label{sec:3D:instability}

In this section we consider a class of {\em two-and-a-half dimensional} exact solutions of the three-dimensional MRE system \eqref{eq:MRE}, when $\gamma = 0$, and show that for suitable choices of initial conditions, these solutions exhibit infinite time growth of gradients. These examples draw on an analogy with the 3D Euler equation, for which Yudovich~\cite{Yudovich74,Yudovich00} has constructed similar solutions. Theorem~\ref{thm:growth} below gives an example in which magnetic relaxation holds with respect to the $L^2$ norm, but fails with respect to the $H^1$ norm.  Furthermore, as in the work of Elgindi and Masmoudi~\cite{ElgindiMasmoudi20} we construct examples where the magnetic current grows exponentially in time, for all time.

To fix notation, for any vector $x \in \RR^3$, we denote by $x_H$ its first two {\em horizontal} components, i.e. $x_H = (x_1,x_2)$. We also write $\div_H = \nabla_H \cdot$ where $\nabla_H = (\partial_1, \partial_2)$. 

\subsection{Euler examples}
We recall from~\cite{Yudovich74,Yudovich00} the following two-and-a-half dimensional solution of 3D Euler, which exhibits infinite time growth of the vorticity. 

The setting is as follows. Consider any stationary state $v = v(x_H)$ of the  2D  Euler equations on $\TT^2$. These stationary states may be written as $ v =  \nabla_H^\perp \phi$, where the periodic stream function $\phi \colon \TT^2 \to \RR$ satisfies $  \Delta_H \phi = F(\phi)$ for a sufficiently smooth $F$. Then, an exact solution of the 3D Euler system is given by 
\begin{align}
u(x,t) = (v(x_H), g(x_H,t))
\label{eq:3D:Euler:exact}
\end{align}
where the function $g \colon \TT^2 \times \RR_+ \to \RR$ satisfies the transport equation
\begin{align}
\p_t g + (v \cdot \nabla_H) g = 0 \,. 
\label{eq:g:Euler}
\end{align}
Indeed, one may verify that $\p_t u + u \cdot \nabla u = ( v \cdot \nabla_H v, \p_t g + v\cdot\nabla_H g) = (- \nabla_H p, 0) = - \nabla p$, where $p=p(x_H)$ is the pressure associated to the steady solution $v$.

{\bf Shear flow}. \,
When $v$ is a shear flow, such as 
$$
v(x_H) = \left(V(x_2),0 \right) 
$$
for a smooth function $V \colon \TT \to\RR$, the solution of \eqref{eq:g:Euler} is explicit in terms of its initial datum $g_0$:
\begin{align}
g(x_H,t) = g_0(x_1 - t V(x_2),x_2) 
\,.
\label{eq:g:Euler} 
\end{align}
Thus, combining \eqref{eq:3D:Euler:exact} with \eqref{eq:g:Euler}, we are lead to the following exact solution of 3D Euler:
\begin{align}
u(x,t) = \left( V(x_2), 0 , g_0(x_1 - t V(x_2),x_2) \right) \,.
\label{eq:Yudovich}
\end{align}
Even though $u(\cdot,t)$ remains bounded with respect to the $L^\infty$ norm, the vorticity $\omega = \nabla \times u$ has the property that its first component is given by
$$
\omega_1(x,t) = (\p_2 u_3 - \p_3 u_2)(x,t) = - t V'(x_2) (\p_1 g_0)( x_1 - t V(x_2),x_2) + (\p_2 g_0)( x_1 - t V(x_2),x_2) \,.
$$
It is clear that for suitable choice of the initial datum $g_0$, and for $V \not \equiv $ constant, we have that $\norm{\omega_1(\cdot,t)}_{L^\infty} \gtrsim t$ as $t\to\infty$. 
As such, in~\cite{Yudovich00}, Yudovich calls the solution given in \eqref{eq:Yudovich} as weakly nonlinearly unstable.

{\bf Hyperbolic flow}. \, 
In analogy with the above example, Elginidi and Masmoudi~\cite{ElgindiMasmoudi20} consider the stationary solution $v = v(x_H)$ appearing in \eqref{eq:3D:Euler:exact} to be an eigenfunction of the Laplacian which displays hyperbolic dynamics near the separatrix; more precisely, they consider
\begin{equation}
    v = \nabla_H^\perp \left( \sin x_1 \sin x_2 \right)
    = \begin{pmatrix}
          -\sin x_1 \cos x_2 \\
          \cos x_1 \sin x_2
           \end{pmatrix}.
\end{equation}
In this case,  the solution $g$ of the transport equation \eqref{eq:g:Euler} is again bounded, but its derivative in the $x_1$ direction, restricted to the separatrix $\{ x_2 = 0\}$ satisfies the equation
\begin{align*}
\p_t (\p_1 g)|_{x_2=0} - \sin x_1 \p_1 (\p_1 g)|_{x_2=0} = \cos x_1 (\p_1 g)|_{x_2=0}
\,,
\end{align*}
and as such we have that $(\p_1 g)(0,0,t) = (\p_1 g)(0,0,0) e^{t}$. Thus, in this situation the solution $u$ of 3D Euler given by \eqref{eq:3D:Euler:exact} exhibits exponential growth with respect to time of the first component of the vorticity
\begin{align*}
\norm{\omega_1(\cdot,t)}_{L^\infty} \geq e^t  |\omega_1(0,0,0)|\,.
\end{align*}

\subsection{MRE examples}
The example of~Yudovich outlined above, has a direct correspondent for the 3D MRE system. The main observation is that 
an exact solution of the 3D MRE equations \eqref{eq:MRE} with $\gamma = 0$ is given by
\begin{align}
B = \left( v , g \right), \qquad  u = \left(0, 0,  (v \cdot \nabla_H) g \right)
\label{eq:3D:ansatz}
\end{align}
where $g = g(x_H,t) \colon \TT^2 \times \RR_+ \to \RR$ satisfies the rank $1$ diffusion equation
\begin{align}
\partial_t g = ( v \cdot \nabla_H)^2 g \label{eq:g:evo}
\,.
\end{align}
In order to verify this, we start from the ansatz \eqref{eq:3D:ansatz} and the fact that $g$ is independent of $x_3$, to immediately see that $B$ and $u$ are divergence free, so that \eqref{eq:div:free} holds. When $\gamma = 0$, and with the ansatz~\eqref{eq:3D:ansatz}, the first two components of \eqref{eq:u:def} become $ 0 = v\cdot\nabla_H v + \nabla_H p$; this identity holds because $v$ was chosen to be an exact stationary state of the 2D Euler equations. On the other hand, the third component of \eqref{eq:u:def} reads $u_3 = v \cdot \nabla_H g$, which justifies the definition of $u_3$ in \eqref{eq:3D:ansatz}. Lastly, in view of \eqref{eq:3D:ansatz} we have $u\cdot \nabla B = 0$, since $v, g$ are independent of $x_3$, and $B \cdot \nabla u = e_3 (v \cdot \nabla_H)(v\cdot \nabla_H) g$. Therefore, \eqref{eq:g:evo} ensures that \eqref{eq:B:evo} holds, as claimed.\

Comparing the MRE evolution of $g$ in~\eqref{eq:g:evo} to the Euler evolution of $g$ in~\eqref{eq:g:Euler}, we see that the main difference is that $g$ does not solve a transport equation, but rather a {\em rank $1$ diffusion equation}. Nonetheless, the gradient of $g$ may still exhibit infinite time growth, which is what we show next.

{\bf Shear flow}. \,  The evolution equation \eqref{eq:g:evo} is particularly easy to solve if the 2D Euler steady state $v$ is chosen to be a shear flow. As such, consider
\begin{align}
v(x_H) = \left(V(x_2),0 \right) 
\label{eq:v:shear}
\end{align}
for a smooth $\TT$-periodic scalar function $V$. With \eqref{eq:g:evo}, the evolution \eqref{eq:g:evo} becomes
\begin{align}
\p_t g = V^2(x_2) \partial_{11} g
\label{eq:b1:evo} 
\end{align}
which is a heat equation in the $(x_1,t)$ variables, with viscosity coefficients that depend on $x_2$. In particular, if we choose an initial datum $g_0$ which is just a function of $x_1$ and such that its mean-free part is an eigenfunction of $\p_{11}$, i.e. 
\begin{align}
- \p_{11} g_0(x_1) = \lambda^2 \left( g_0(x_1) - \fint_{\TT} g_0 \right)
\label{eq:g0:explicit}
\end{align}
for some $\lambda>0$,  we have that the solution of  equation \eqref{eq:b1:evo} is given by 
\begin{align}
g(x_1,x_2,t) =\fint_{\TT} g_0  + \exp\left(- \lambda^2 V^2(x_2) t \right) \left( g_0(x_1)   - \fint_{\TT} g_0  \right)
\,.
\label{eq:g:explicit}
\end{align}
We have thus shown that if the 2D Euler steady state is given by the shear flow in \eqref{eq:v:shear}, and if the initial datum for the third component of the magnetic field is a function that satisfies \eqref{eq:g0:explicit}, then for $g$ given by \eqref{eq:g:explicit} the functions 
\begin{align}
 B(x,t) = \left (V(x_2), 0, g(x_1,x_2,t)\right), \qquad u(x,t) = \left(0, 0, V(x_2) \p_1 g(x_1,x_2,t) \right)
 \label{eq:more:general:sol}
\end{align}
are exact solutions of the 3D MRE equations \eqref{eq:MRE} with $\gamma = 0$. 

In particular, the above example shows that solutions of \eqref{eq:MRE} with $\gamma = 0$ and $d=3$ exhibit infinite time growth of gradients, even if the initial datum is a small perturbation of the steady state $B = e_3$ and $u=0$:  
\begin{theorem}[\bf Example of solution with infinite time growth]
\label{thm:growth}
 There exists an incompressible initial condition $(B_0 ,u_0)$  such that $B_0 - e_3 = \OO(\eps)$ and $u_0 = \OO(\eps^2)$ in arbitrary strong topologies (e.g.~real-analytic), and such that the unique solution $B$ of the 3D MRE equation \eqref{eq:MRE} with this initial datum and $\gamma = 0$ satisfies $\norm{\nabla B(\cdot,t)}_{L^2} = \OO(\eps^{3/2} t^{1/4})$ as $t\to \infty$.
\end{theorem}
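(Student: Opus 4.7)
The plan is to exhibit explicit initial data within the two-and-a-half-dimensional family of exact solutions~\eqref{eq:more:general:sol}--\eqref{eq:g:explicit} just constructed, and then to verify the claimed $t^{1/4}$ growth rate by a one-dimensional Laplace-type asymptotic analysis.

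Specifically, I would take $V(x_2) = \varepsilon \sin x_2$ and $g_0(x_1) = 1 + \varepsilon \cos x_1$, so that the mean-free part of $g_0$ is an eigenfunction of $-\partial_{11}$ with eigenvalue $\lambda^2 = 1$ as required by~\eqref{eq:g0:explicit}. This produces divergence-free, real-analytic initial data
\[
B_0(x) = (\varepsilon \sin x_2,\, 0,\, 1 + \varepsilon \cos x_1), \qquad u_0(x) = (0,\, 0,\, -\varepsilon^2 \sin x_1 \sin x_2),
\]
satisfying $B_0 - e_3 = \OO(\varepsilon)$ and $u_0 = \OO(\varepsilon^2)$ in every $C^k$ or $H^s$ topology. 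The formulas~\eqref{eq:more:general:sol} and~\eqref{eq:g:explicit} then yield the global real-analytic solution
\[
B(x,t) = \bigl(\varepsilon \sin x_2,\, 0,\, 1 + \varepsilon \cos(x_1)\, e^{-\varepsilon^2 \sin^2(x_2)\, t}\bigr)
\]
of~\eqref{eq:MRE} with $\gamma = 0$, which is unique in any $H^s$ class with $s>5/2$ by Theorem~\ref{thm:local}.

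The substance of the proof is then the $L^2$ asymptotics of $\nabla B$. Differentiating in $x_2$ gives $\partial_2 g = -2\varepsilon^3 t \sin x_2 \cos x_2 \cos x_1 \, e^{-\varepsilon^2 \sin^2(x_2) t}$, so that, up to fixed constants from integrating out the $x_1$ and $x_3$ dependence,
\[
\|\partial_2 g(\cdot,t)\|_{L^2(\TT^3)}^2 \;\propto\; \varepsilon^6 t^2 \int_{-\pi}^\pi \sin^2 x_2 \cos^2 x_2 \, e^{-2\varepsilon^2 \sin^2(x_2)\, t}\, dx_2.
\]
As $t\to \infty$ this one-dimensional integral concentrates near the zeros of $\sin$ at $x_2 \in \{0, \pm\pi\}$; rescaling $y = \varepsilon\sqrt{t}\, x_2$ around each zero and using $\sin x_2 = x_2 + O(x_2^3)$ reduces it to $(\varepsilon^2 t)^{-3/2} \int_\RR y^2 e^{-2y^2}\, dy \cdot (1+o(1))$. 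Consequently $\|\partial_2 g(\cdot,t)\|_{L^2}^2 \simeq \varepsilon^3 t^{1/2}$, i.e.\ $\|\partial_2 g(\cdot,t)\|_{L^2} \simeq \varepsilon^{3/2}\, t^{1/4}$.

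To finish, I would check that the remaining contributions to $\|\nabla B\|_{L^2}$ are of lower order: $\partial_2 B_1 = \varepsilon \cos x_2$ is time-independent and of size $\varepsilon$, while the same Laplace analysis applied to $\partial_1 g = -\varepsilon \sin(x_1) e^{-\varepsilon^2 \sin^2(x_2) t}$ gives $\|\partial_1 g(\cdot,t)\|_{L^2}^2 \simeq \varepsilon\, t^{-1/2} \to 0$. Therefore the $\partial_2 g$ term dominates and produces exactly the claimed $\varepsilon^{3/2}\, t^{1/4}$ rate. The only substantive step is this Laplace asymptotic, which is routine: away from the zeros of $\sin x_2$ the integrand is exponentially suppressed by $e^{-c\varepsilon^2 t}$, and inside a shrinking neighborhood of each zero the Taylor expansion of $\sin x_2$ contributes only $1+o(1)$ multiplicative corrections after rescaling. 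I do not anticipate any serious obstacle beyond careful bookkeeping of the powers of $\varepsilon$ and $t$.
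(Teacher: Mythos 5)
Your proposal is correct and follows essentially the same route as the paper: the same initial data $V(x_2)=\varepsilon\sin x_2$, $g_0(x_1)=1+\varepsilon\cos x_1$, the same explicit solution $g(x_1,x_2,t)=1+\varepsilon\cos(x_1)e^{-\varepsilon^2\sin^2(x_2)t}$, and the same dominant term $\partial_2 B_3$. The paper simply asserts the limits \eqref{eq:explicit:growth} with the explicit constants $C_1=(2\pi^3)^{1/4}$, $C_2=(2\pi^2/e)^{1/2}$, whereas you flesh out the Laplace-type asymptotic at the zeros of $\sin x_2$ and verify that $\partial_2 B_1$ and $\partial_1 B_3$ are lower order; both of these checks are correct and consistent with the paper's stated constants.
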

\begin{proof}[Proof of Theorem~\ref{thm:growth}]
In \eqref{eq:more:general:sol}, take $V(x_2) = \eps \sin(x_2)$, and $g_0(x_1) = 1 + \eps \cos(x_1)$. This corresponds to the initial conditions
\begin{align*}
B_0 = e_3 + \eps (\sin(x_2), 0 , \cos(x_1))
\qquad \mbox{and} \qquad
u_0 = - \eps^2 (0,0,\sin(x_2) \sin(x_1))
\end{align*}
which are clearly divergence free, and smooth. Due to the local existence and uniqueness theorem, we know that the solution is given by \eqref{eq:more:general:sol}, where by \eqref{eq:g:explicit} we have
\begin{align*}
g(x_1,x_2,t) =  1 + \eps  \cos(x_1) \exp\left(- \eps^2 \sin^2(x_2) t \right)  \,.
\end{align*}
In particular, by \eqref{eq:more:general:sol} and the above formula, we have that 
\begin{align*}
\p_{2} B_3(x,t) =  - 2 t \eps^3 \sin(x_2) \cos(x_2)  \cos(x_1) \exp\left(- \eps^2 \sin^2(x_2) t \right)
\end{align*}
and so we may explicitly compute  
\begin{subequations}
\label{eq:explicit:growth}
\begin{align}
\lim_{t\to\infty} \frac{1}{C_1 \eps^{3/2}  t^{1/4}}  \norm{\partial_2 B_3(\cdot,t)}_{L^2_{x_1,x_2}} &= 1 \\
\lim_{t\to\infty} \frac{1}{C_2 \eps^2 t^{1/2} }  \norm{\partial_2 B_3 (\cdot,t)}_{L_{x_1}^2L^\infty_{x_2} } &= 1 
\end{align}
\end{subequations}
where $C_1 = (2\pi^3)^{1/4}$, $C_2 = (2 \pi^2/e)^{1/2}$. 
Therefore, we have   $\norm{\nabla B(\cdot,t)}_{L^2} = \OO(\eps^{3/2} t^{1/4})$ as $t\to \infty$.
\end{proof}

\begin{remark}[\bf Asymptotic behavior with respect to weak topologies]
\label{rem:asymptotic:L2}
While the $\dot{H}^1$ norm of the solution $B$ defined by \eqref{eq:more:general:sol} is growing without bound as time goes to infinity, we emphasize that its $L^2$ norm remains uniformly bounded.
In fact, for the solution in \eqref{eq:more:general:sol} we have the following asymptotic {\em pointwise} description 
\begin{align}
\lim_{t\to \infty} B(x,t) 
\to \bar B(x) :=
\begin{cases}
\left(V(x_2), 0 ,  \fint_{\TT}g_0 \right)\,, & \mbox{if } V(x_2) \neq 0 \\
\left(V(x_2), 0 ,   g_0(x_1) \right)\,, & \mbox{if } V(x_2) \neq 0 
\end{cases}
\label{eq:bar:B:example}
\end{align}
and $\lim_{t\to \infty} u(x,t) =0$.
This is thus an example of magnetic relaxation: $B$ converges to a magnetostatic equilibrium $\bar B$,  while $u$ converges to $0$, as $t\to \infty$. However, this relaxation holds with respect to {\em weak topologies} only (e.g.~$L^2$), and weak nonlinear instability takes place in stronger topologies (e.g.~$H^1$).
\end{remark}

\begin{remark}[\bf The emergence of current sheets in the infinite time limit]
\label{rem:current}
We note that even though the initial datum in the example of Remark~\ref{rem:asymptotic:L2} is smooth, namely $B_0 = (V(x_2),0,g_0(x_1))$, the (weak) limiting magnetostatic equilibrium $\bar B$ may contain discontinuities in the vertical direction. For instance, take $V(x_2) = {\bf 1}_{x_2\in[-\pi/2,\pi/2]} \cos^2(x_2)$, and $g_0(x_1) = \sin(x_1)$. Then we have that the $\bar B$ vector field defined in \eqref{eq:bar:B:example} is given by 
\begin{align*}
\bar B(x) =
\begin{cases}
\left(\cos^2(x_2), 0 ,  0 \right)\,, & \mbox{if } |x_2| \leq \pi/2 \neq 0 \\
\left(0, 0 ,   \sin(x_1) \right)\,, & \mbox{if } |x_2|> \pi/2 \neq 0 
\end{cases}
\,,
\end{align*}
which clearly contains a discontinuity along the planes $\{x\in \TT^3 \colon x_2 = \pm \pi/2\}$. The associated current field $\bar j = \nabla \times \bar B$ is given by the sum of a bounded piece ${\bf 1}_{|x_2|<\pi/2} (0,0,\sin(2x_2)) + {\bf 1}_{|x_2|>\pi/2} (0,-\cos(x_1),0)$, and a singular part which a Dirac mass supported on the planes $x_2 = \pm \pi/2$ and has amplitude $(\pm \sin(x_1), 0, 0)$.
\end{remark}

{\bf Hyperbolic flow}. \,
While Theorem~\ref{thm:growth} exhibits solutions whose magnetic current grows algebraically in time as $t\to \infty$, following \cite{ElgindiMasmoudi20} we may show that if $v$ is chosen to be the cellular flow
\begin{equation}
\label{eq:hyperbolic:flow}
  v = \nabla_H^\perp \left(\sin x_1 \sin x_2 \right)
     = \begin{pmatrix}
          -\sin x_1 \cos x_2 \\
          \cos x_1 \sin x_2
           \end{pmatrix}
           \,,
\end{equation}
then we can in fact find solutions $g$ of \eqref{eq:g:evo}, and hence of the MRE equations, which exhibit exponential growth of their gradients. This is the worst growth that they can sustain, given that \eqref{eq:g:evo} is an equation linear in $g$.

\begin{theorem}[\bf Example of solution with exponential growth]
\label{thm:exponential:growth}
Let  $v$ be as in \eqref{eq:hyperbolic:flow}.
For any initial data $g_0 \in H^{k}$, with $k\geq 3$ an integer, which satisfyies $\nabla_H g_0 (0,0) \neq 0$, the unique solution of the 3D MRE equation \eqref{eq:MRE} with initial data $B_0 = (v , g_0)$ and $u_0 = (0, 0, v\cdot\nabla_H g_0)$, satisfies 
\begin{equation}
\label{eq:B:exp:growth}
\abs{\nabla_H g_0 (0,0)} e^t \leq \| \nabla B(\cdot,t)\|_{L^{\infty}} \leq C \norm{B_0}_{H^k} e^{C t}
\,,
\end{equation}
where $C>0$ is a constant which only depends on $k$.  
\end{theorem}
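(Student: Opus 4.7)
Exploit the two-and-a-half-dimensional ansatz established earlier in the subsection: any classical solution of \eqref{eq:MRE} with $\gamma=0$ issuing from $B_0=(v,g_0)$, $u_0=(0,0,v\cdot\nabla_H g_0)$ must retain the form $B(x,t)=(v(x_H),g(x_H,t))$ and $u(x,t)=(0,0,(v\cdot\nabla_H g)(x_H,t))$, where $g\colon\TT^2\times[0,\infty)\to\RR$ solves the linear degenerate parabolic equation
\begin{equation*}
\partial_t g = L^2 g,\qquad L:=v\cdot\nabla_H,\qquad g(\cdot,0)=g_0
\end{equation*}
on $\TT^2$. This identification is legitimate because Theorem~\ref{thm:local} (applied with $s=k\geq 3>5/2$, $\gamma=0$) furnishes a unique local-in-time $H^k$-solution of MRE, which must then coincide with the explicit ansatz. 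Both halves of \eqref{eq:B:exp:growth} therefore reduce to statements about the scalar $g$; to smooth out minor pointwise regularity issues at the origin one first runs the argument for $C^\infty$ data and then passes to the $H^k$ limit via Sobolev continuity $H^k\subset C^1$.

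\textbf{Lower bound.} The point $(0,0)\in\TT^2$ is a hyperbolic stagnation point of $v$: $v(0,0)=0$ and $M:=\nabla v(0,0)=\mathrm{diag}(-1,1)$. Expanding $(v\cdot\nabla_H)^2 g = v_i v_j\,\partial_{ij}g + v_i(\partial_i v_j)\,\partial_j g$, differentiating in $x_k$, and evaluating at the origin, every term carrying some factor $v_\ell$ vanishes, leaving
\begin{equation*}
\partial_t(\partial_k g)(0,0,t) = (\partial_k v_j)(\partial_j v_i)(\partial_i g)|_{(0,0)} = (M^2)_{ki}\,(\partial_i g)(0,0,t).
\end{equation*}
Since $M^2=\Id$, the two-vector $\nabla_H g(0,0,\cdot)$ obeys the decoupled linear ODE $\phi'=\phi$, so $\nabla_H g(0,0,t)=e^t\,\nabla_H g_0(0,0)$. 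Because $B_3=g$ and the $(k,3)$-entry of $\nabla B$ equals $\partial_k g$ for $k=1,2$, this yields $\|\nabla B(\cdot,t)\|_{L^\infty}\geq|\nabla_H g(0,0,t)|=|\nabla_H g_0(0,0)|\,e^t$.

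\textbf{Upper bound.} Linearity of the $g$-equation and smoothness of the coefficients give at most exponential growth of $\|g(\cdot,t)\|_{H^k}$. Using $\div v=0$ one first rewrites $L^2 g = \div_H(v\,Lg)$; for each $|\alpha|\leq k$, integrating by parts in the $\partial^\alpha$ energy identity produces the dissipation $-\|L\partial^\alpha g\|_{L^2}^2$ together with commutator remainders. The would-be top-order remainders come from the $|\beta|=1$ pieces $\int \nabla\partial^\alpha g\cdot(\partial_m v)\,\partial^{\alpha-e_m}(Lg)\,dx$; after an additional integration by parts and use of the skew-adjointness of $L$ (a direct consequence of $\div v=0$), these reorganize into $\int (L\partial^\alpha g)\,(\partial_m v)\,\partial^{\alpha-e_m}(\nabla g)\,dx$ plus genuinely lower-order terms, and the first factor is absorbed into the dissipation via Young's inequality. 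One closes with $\tfrac{d}{dt}\|g\|_{H^k}^2\leq C(v,k)\|g\|_{H^k}^2$, and Gr\"onwall gives $\|g(\cdot,t)\|_{H^k}\leq \|g_0\|_{H^k}e^{Ct}$. Sobolev embedding $H^k(\TT^2)\subset W^{1,\infty}$ (valid since $k\geq 3>d_H/2+1$), together with $\|\nabla B\|_{L^\infty}\leq\|\nabla v\|_{L^\infty}+\|\nabla_H g\|_{L^\infty}$ and $\|B_0\|_{H^k}\geq\|g_0\|_{H^k}$, yields the right inequality of \eqref{eq:B:exp:growth}.

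\textbf{Main obstacle.} The delicate step is precisely the $H^k$ estimate: since the effective ``diffusion matrix'' $v\otimes v$ has rank one and vanishes on the nodal set of $v$ (in particular at the stagnation points), the parabolic dissipation $\|L\partial^\alpha g\|_{L^2}^2$ controls only the $v$-directional derivative of $\partial^\alpha g$, and a naive Kato--Ponce commutator bound would lose one full derivative of $g$. What saves the argument is the algebraic fact that $L^2$ is the divergence of a rank-one symmetric tensor, so every top-order commutator term necessarily carries an extra factor $v_j$ that reconstructs a copy of $L\partial^\alpha g$ absorbable into the available dissipation; commutators with $|\beta|\geq 2$ involve at most $k-1$ derivatives of $g$ against uniformly bounded derivatives of $v$ and are bounded directly by $\|g\|_{H^k}^2$. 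Without this structural rearrangement the standard energy method would appear to lose a derivative.
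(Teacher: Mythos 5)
Your proof is correct and follows essentially the same route as the paper: reduce to the scalar degenerate parabolic equation $\partial_t g=(v\cdot\nabla_H)^2 g$, obtain the lower bound by evaluating the differentiated equation at the hyperbolic stagnation point $(0,0)$ where the $v$-factors drop out, and obtain the upper bound from an $\dot H^k$ energy estimate for $g$ plus Sobolev embedding. Your lower-bound computation via $M=\nabla v(0,0)$ and $M^2=\mathrm{Id}$ is a slightly cleaner, coordinate-free version of the paper's direct differentiation of the explicit equation, and your treatment of the $\dot H^k$ commutator -- using $\div v=0$ and skew-adjointness of $L=v\cdot\nabla_H$ to absorb the would-be top-order remainder into the dissipation $\|L\partial^\alpha g\|_{L^2}^2$ -- supplies the justification that the paper only asserts; that absorption step is genuinely needed, as you correctly point out, since a naive Kato--Ponce bound would lose a derivative.
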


We note that while the lower bound on the Lipschitz norm of $B$ given by \eqref{eq:B:exp:growth} behaves as $e^t$, in \eqref{eq:nablaB:bound} we have obtained an upper bound which behaves as $e^{C t^{1/2}}$ as $t\to\infty$; this difference stems from the fact that \eqref{eq:B:exp:growth} holds for $\gamma =0$, while 
\eqref{eq:nablaB:bound} holds for $\gamma > d/2+1 = 5/2$. This indicates a different behavior between the MRE equation ($\gamma = 0$ in \eqref{eq:MRE}), and the regularized MRE equation ($\gamma > 0$ is large).

\begin{proof}[Proof of Theorem~\ref{thm:exponential:growth}]
With $v$ as defined in \eqref{eq:hyperbolic:flow}, the $g$  equation \eqref{eq:g:evo} becomes
\begin{align}
\label{eq:g:hyperbolic}
    \p_t g &= \sin^2 x_1 \cos^2 x_2  \p_1^2 g + \cos^2 x_1 \sin^2 x_2 \p_2^2 g - \tfrac{1}{2} \sin(2 x_1) \sin(2 x_2) \p_1 \p_2 g \notag\\
    &\qquad + \tfrac{1}{2} \sin(2 x_1) \p_1 g + \tfrac{1}{2} \sin (2 x_2) \p_2 g.
 \end{align}
The upper bound in \eqref{eq:B:exp:growth} follows from the energy identity
$$
\frac 12 \frac{d}{dt} \| g \|_{L^2}^2  + \| (v \cdot \nabla) g \|_{L^2}^2 = 0 
\,,
$$
the $\dot{H}^k$ estimate for \eqref{eq:g:evo}
$$
\frac 12 \frac{d}{dt} \| g \|_{\dot{H}^k}^2  + \| (v \cdot \nabla) \nabla^k g \|_{L^2}^2 \lesssim \| v \|_{H^k}\| v \|_{H^{k+1}} \| g \|_{H^k}^2
\les \| g \|_{H^k}^2
\,,
$$
and the fact that the condition $k>2$ implies by the 2D Sobolev embedding that $H^k \subset {\rm Lip}$.  

In order to obtain the lower bound in \eqref{eq:B:exp:growth}, we assume without loss of generality that $\p_1 g_0(0,0) \neq 0$ (the case $\p_2 g_0(0,0) \neq 0$ is treated in the same way). 
Then, we differentiate  \eqref{eq:g:hyperbolic} with respect to $x_1$  and arrive at the equation
\begin{equation}
\label{eq:g:exp:growth}
    \p_t (\p_1 g)( 0, 0, t) = (\p_1 g) (0 , 0 ,t)  
    \,,
\end{equation}
The exponential growth for the gradient of $g$, and hence of $B$ in view of \eqref{eq:3D:ansatz}, with respect to the supremum norm now directly follows.
\end{proof}

\section{Open problems}
\label{sec:open:problems}
We conclude the paper by highlighting a number of interesting open problems for the magnetic relaxation equation.

\subsection{Global well-posedness vs finite time blowup}
We have shown in Theorem~\ref{thm:local} that local existence and uniqueness of strong solutions for the MRE equations~\eqref{eq:MRE} holds irrespective of the regularization parameter $\gamma \geq 0$. However, we were only able to prove that these solutions remain smooth for all time (necessary in order to consider the relaxation in the infinite time limit), when $\gamma$ was sufficiently large, namely $\gamma > d/2+1$. Naturally, one is left to consider:
\begin{itemize}
\item [{\bf Q1.}]  For $\gamma \in [0, d/2+1]$ can the local smooth solutions to the {\em active vector} equation \eqref{eq:MRE} be extended to global ones, or do finite time singularities arise?
\end{itemize}
We emphasize that for $d=2$ and for initial magnetic field $B_0$ of zero mean, we may identify a zero mean {\em scalar} magnetic stream function $\phi =  \Delta^{-1} \nabla^\perp \cdot B$,  so that $B = \nabla^\perp \cdot \phi$. Then, the evolution equation \eqref{eq:B:evo} becomes the active scalar equation
\begin{align}
\p_t \phi + u \cdot \nabla \phi = 0
\label{eq:MRE:2d:a} 
\end{align}
where the constitutive law $\phi \mapsto u$ is given by 
\begin{align}
 u = (-\Delta)^{-\gamma} \Proj \div \big(\nabla^\perp \phi \otimes \nabla^\perp \phi \big)  
 \,.
 \label{eq:MRE:2d:b} 
\end{align}
 The active scalar equation \eqref{eq:MRE:2d:a}--\eqref{eq:MRE:2d:b} has a quadratic in $\phi$ constitutive law, and thus a cubic nonlinearity, making the analysis of the 2d MRE equation  more cumbersome when compared to classical models in the canon of active scalar equations, such as SQG~\cite{ConstantinMajdaTabak94} or IPM~\cite{CastroCordobaGancedoOrive09}.

\subsection{Magnetic relaxation}
Assuming that the answer to {\bf Q1} is positive, i.e., that the smooth solutions to  \eqref{eq:MRE} are global in time (and thus the evolution \eqref{eq:B:evo} truly is topology preserving), the fundamental question is whether as $t\to \infty$ the magnetic field $B(\cdot,t)$ {\em relaxes} to an MHD/Euler equilibrium $\bar B$ satisfying \eqref{eq:steady:MHD}. We have discussed in Remark~\ref{rem:relaxation?} the fact that the convergence of $u(\cdot,t) \to 0$ as $t\to \infty$, even with respect to very strong norms, is in general not sufficient to guarantee that weak $L^2$ subsequential limits of $B(\cdot,t)$ are magnetostatic equilibria, i.e., that they solve \eqref{eq:steady:MHD}. Also, in view of Remark~\ref{rem:good:bounds?} and of the results in Section~\ref{sec:3D:instability} we have shown that {\em generically} we cannot expect magnetic relaxation with respect to strong norms, such as $H^1$ or ${\rm Lip}$. Thus, we are naturally lead to:
\begin{itemize}
\item [{\bf Q2.}]  Given a global in time solution $B(\cdot,t)$ to \eqref{eq:MRE}, does there exist a weak solution $\bar B \in L^2(\TT^d)$ of \eqref{eq:steady:MHD} and a subsequence $t_k \to \infty$ such that we have the weak convergence $B(\cdot,t_k) \rightharpoonup \bar B$? Furthermore, does the answer change from $d=2$ to $d=3$, or for various values of  $\gamma$?
\end{itemize}
Furthermore, in~\cite[Section 8, Question (vii)]{Moffatt21} Moffatt poses the question:
\begin{itemize}
\item [{\bf Q3.}]  For the magnetic relaxation problem~\eqref{eq:MRE}, when the initial field is chaotic, what is the asymptotic structure of the relaxed field? Equivalently, what is the function space within which this relaxed field $\bar B$ must reside?
\end{itemize}
The examples given in Section~\ref{sec:3D:instability} show that for the two-and-a-half dimensional solutions constructed via \eqref{eq:3D:ansatz}--\eqref{eq:g:evo}  the answer to {\bf Q2} is positive (see Remark~\ref{rem:asymptotic:L2}), but that the magnetostatic equilibria $\bar B$ may contain current sheets (see Remark~\ref{rem:current}), so that they are not smooth. In fact, by the maximum principle it follows that  all examples of steady states $\bar B$ arising as infinite time limits from the ansatz \eqref{eq:3D:ansatz}--\eqref{eq:g:evo} will lie in $L^\infty$ due to the maximum principle, but our examples show that $\bar B$ cannot in general be expected to lie in $C^0$. We note also that for generic initial data it remains an open problem to show that the MRE evolution \eqref{eq:MRE} is such that $\norm{B(\cdot,t)}_{L^p}$ remains uniformly bounded in time, for any $p>2$. Thus, in general we do not  know if the answer to question {\bf Q3} is better than $\bar B \in L^2$.

\subsection{Global weak solutions}

In the absence of a positive answer to {\bf Q1}, one may wonder whether the MRE system \eqref{eq:MRE} at least possesses global weak solutions for generic initial data. Note that in order to define weak solutions a  minimal requirement is that $B(\cdot,t) \in L^2_x$, in order to properly define a distribution $u$ via \eqref{eq:B:to:u}. However, defining weak solutions to \eqref{eq:B:evo} additionally requires that $(B\otimes u - u \otimes B) \in L^1_{{\rm loc},t,x}$, which is for instance true when $B(\cdot,t) \in L^2$ if we also know that  $u(\cdot,t) \in L^2_x$. Note that when $\gamma$ is not large (e.g. for $\gamma =0$), we cannot deduce from the square integrability of $B$ and \eqref{eq:B:to:u} that $u$ is also square integrable (by the Sobolev embedding this would require $\gamma > d/4 + 1/2$). The energy inequality \eqref{eq:energy:a} comes to the rescue, providing for any $\gamma\geq 0$ the required square integrability in space, locally in time, for the velocity field. A natural question thus is:

\begin{itemize}
\item [{\bf Q4.}]  Given $B_0 \in L^2$ and $\gamma \geq 0$ does there exist a global weak solution $B\in L^\infty_t L^2_x$ of \eqref{eq:MRE} which satisfies the energy inequality \eqref{eq:energy:a}? Alternatively, for $\gamma \in (d/4 + 1/2, d/2+1]$, does there exist a global weak solution $B\in L^\infty_t L^2_x$ of \eqref{eq:MRE} which does not satisfy energy inequality \eqref{eq:energy:a}?  
\end{itemize}
The first part of question {\bf Q4} is nontrivial because the dissipative term in \eqref{eq:energy:a} does not yield robust compactness properties for the vector field $B$. Note, however, that Brenier~\cite{Brenier14}, in the two dimensional case and with $\gamma= 0$, managed to obtain  global in time measure-valued solutions, a notion of solution which is weaker than the one of a weak solution, but which retains a weak-strong uniqueness property.  Concerning the second part of question {\bf Q4}, we note that the cubic nature of the nonlinear term in \eqref{eq:B:evo} and the geometric properties of the constitutive law \eqref{eq:u:def}, prevent the immediate application of convex integration techniques to the MRE system. Indeed, both the $L^\infty$-based convex integration techniques of De Lellis-Szekelyhidi~\cite{DeLellisSzekelihidi12} and the $L^2$-based intermittent convex integration developed by Buckmaster and the third author~\cite{BuckmasterVicol21}, do not seem to be directly applicable to the evolution equation \eqref{eq:MRE}, so that a potentially different convex integration method would need  to be developed for the MRE system.

\subsection{Other models}
A number of other topology preserving diffusion equations have been proposed in the literature, which all have the property that the steady states are incompressible Euler equilibria. We mention for instance the models of Vallis-Carnevale-Young~\cite{VallisCarnevaleYoung89} and Bloch-Marsden-Ratiu~\cite{BlochMarsdenRatiu96}. Other types of coercive damping mechanisms, which, however, do not preserve the topology of the streamlines, were considered in~\cite{Nishiyama01,Nishiyama03,PasqualottoThesis20}. Most if not all of the questions considered in this paper (global existence of solutions, relaxation towards Euler steady states as $t\to \infty$) could be asked about those models. It would be interesting to compare (analytically or numerically) the long-time properties of solutions to the models in \cite{VallisCarnevaleYoung89} or~\cite{BlochMarsdenRatiu96}, with those for the MRE equation. Is there any one model better suited for magnetic relaxation?

\bibliographystyle{alpha}
\bibliography{VladBib}


%
%
%
%
%

\end{document}